\newcommand{\unit}{\text{\textbf{1}}}
\newcommand{\Hc}{{\mathcal H}}
\newcommand{\Inv}{\operatorname{Inv}}
\newcommand{\sVec}{\operatorname{sVec}}
\newcommand{\Sem}{\operatorname{Sem}}
\newcommand{\K}{\mathbb{C}}
\newcommand{\hB}{\widehat{B}}
\renewcommand{\Vec}{\text{Vec}}
\newcommand{\cC}{{\mathcal C}}
\newcommand{\cF}{{\mathcal F}}
\newcommand{\cA}{{\mathcal A}}
\newcommand{\cZ}{{\mathcal Z}}
\newcommand{\cT}{{\mathcal T}}
\newcommand{\cE}{{\mathcal E}}
\newcommand{\Z}{{\mathbb Z}}
\newcommand{\id}{\operatorname{id}}
\newcommand{\cB}{\mathcal{B}}
\newcommand{\cP}{\mathcal{P}}
\newcommand{\cD}{\mathcal{D}}
\newcommand{\Gbox}{\stackrel{G}{\boxtimes}}
\newcommand{\CGboxD}{\cC\Gbox\cD}
\newcommand{\Fun}{\operatorname{Fun}}
\newcommand{\Rep}{\operatorname{Rep}}
\newcommand{\Id}{\operatorname{Id}}
\newcommand{\Aut}{\operatorname{Aut}}
\newcommand{\TGB}{\cT(G,\cB_z)}
\newcommand{\ndTBB}{\cT^{nd}(B,\cB_z)}
\theoremstyle{definition}
\numberwithin{equation}{section}
\newtheorem{theorem}{Theorem}[section]
\newtheorem{lemma}[theorem]{Lemma}
\newtheorem{corollary}[theorem]{Corollary}
\theoremstyle{definition}
\newtheorem{definition}[theorem]{Definition}
\newtheorem{example}[theorem]{Example}
\theoremstyle{remark}
\newtheorem{remark}[theorem]{Remark}
\theoremstyle{remark}
\author[C. Delaney]{Colleen Delaney}
\address{Department of Mathematics, Purdue University}
\email{colleend@purdue.edu}
\author[C. Galindo]{C\'esar Galindo}
\address{ Departamento de Matem\'aticas, Universidad de los Andes, Bogot\'a, Colombia}
\email{cn.galindo1116@uniandes.edu.co}
\author[J. Plavnik]{Julia Plavnik}
\address{Department of Mathematics, Indiana University, Bloomington and }
\address{Department of Mathematics and Data Science,  Vrije Universiteit Brussel, Belgium}
\email{jplavnik@iu.edu}
\author[E. Rowell]{Eric C. Rowell}
\address{Department of Mathematics, Texas A\&M University}
\email{rowell@math.tamu.edu}
\author[Q. Zhang]{Qing Zhang}
\address{Department of Mathematics, University of California, Santa Barbara}
\email{qingzhang@ucsb.edu}
\begin{document}

\title{The Condensed Fiber Product and Zesting}

\thanks{The authors gratefully acknowledge the support of the American Institute of Mathematics, where this collaboration was initiated. This material is also based upon work supported by the National Science Foundation under Grant No. DMS-1928930, while a subset of the authors were in residence at SLMath in Berkeley, California, during the summer of 2024. We thank D. Nikshych, V. Ostrik, D. Penneys, X.-G. Wen, S.-H. Ng and D. Reutter for useful conversations. C.G. was partially supported by Grant INV-2023-162-2830 from the
School of Science of Universidad de los Andes.
C.D. was partially supported by Simons Foundation Grant \#888984 (Simons Collaboration on Global Categorical Symmetries). J.P was partially supported by US NSF Grant {DMS-2146392} and by Simons Foundation Award \#889000 as part of
the Simons Collaboration on Global Categorical Symmetries. E.C.R. was partially supported by US NSF Grant {DMS-2205962}} 

\begin{abstract}
We introduce the condensed fiber product of two $G$-crossed braided fusion ca\-tegories, generalizing existing constructions in the literature. We show that this product is closely related to the cohomological construction known as zesting. Furthermore, the condensed fiber product defines a monoidal structure on the 2-category of $G$-crossed braided extensions of braided categories with a fixed transparent symmetric pointed subcategory, and the same holds for a certain subcategory of non-degenerate extensions.
\end{abstract}


\subjclass[2000]{16W30, 18D10, 19D23}

\maketitle


\section{Introduction}

The zesting construction \cite{DGPRZ,DGPRZ2} has been of significant utility in both physics and mathematics: it has yielded realizations of previously unknown theories \cite{AIM2012,Palcouxetal,BRW:realizing_modulardate}, provided insights into theories with indistinguishable modular data \cite{isotopes}, and illuminated symmetries in condensed matter systems \cite{ABK}. One feature of zesting is its explicit and intrinsic cohomological formulation. However, it is desirable to have an extrinsic, physically intuitive description. The condensed fiber product, defined below, provides such a description in many cases. Briefly, the condensed fiber product can be viewed as a certain condensation of two stacked theories.

Fix a finite abelian group \( B \) with group of characters \( \hB \), and an element \( z \in B \) such that \( z^2 = 1 \).
Any symmetric pointed fusion category is determined by a pre-metric group \( \mathcal{B}_z = \mathcal{C}(\hB, q_z) \) with quadratic form defined by \( q_z(\varphi) = \varphi(z) \), for some choice of \( B \) and \( z \) (see \cite[Section 8.4]{EGNObook}).  
Now let \( \mathcal{C} \) be a non-degenerate braided fusion category containing \( \mathcal{B}_z \), and suppose \( \mathcal{D} \) is another such non-degenerate category. We have the following inclusions:
\[
\operatorname{Rep}(B) \cong \nabla(\mathcal{B}_z) := \langle (\varphi, \varphi^{-1}) \rangle \subset \mathcal{B}_z \boxtimes \mathcal{B}_z \subset \mathcal{C} \boxtimes \mathcal{D},
\]
where \( \nabla(\mathcal{B}_z) \cong \operatorname{Rep}(B) \) is Tannakian, i.e., bosonic. Since the Deligne product (stacked) category \( \mathcal{C} \boxtimes \mathcal{D} \) is non-degenerate and \( \nabla(\mathcal{B}_z) \) can be condensed, we obtain a new non-degenerate category \( [(\mathcal{C} \boxtimes \mathcal{D})_B]_e \), i.e., the even sector after condensation. In the physics literature, this is known as \emph{boson condensation} \cite{SlingerlandBais}, while in the mathematics literature, it is described as the category of local \( \hB \)-modules in \( \mathcal{C} \boxtimes \mathcal{D} \) \cite[Definition 3.12]{DMNO}. We note that \( [(\mathcal{C} \boxtimes \mathcal{D})_B]_e \) also contains \( (\mathcal{B}_z \boxtimes \mathcal{B}_z)_B \cong \mathcal{B}_z \), and thus is, in some sense, in the same class as \( \mathcal{C} \) and \( \mathcal{D} \). Special cases and closely related constructions are found in both the physics \cite{HsinLamSeiberg,LanWen,LanKongWen,CarolynZhang} and mathematics \cite{DGNOI,DNO,16foldway,Nik2022} literature.

In particular, the condensed fiber product as it applies to topological order shares some structural similarities with the {\it hierarchy construction}, which organizes families of fractional quantum Hall states \cite{LanWen} and was recently given a categorical description in \cite{CarolynZhang}. Performing the hierarchy construction on a $U(1)$-enriched supermodular tensor category $\mathcal{C}$ corresponding to fractional quantum Hall states is a two-step process that (1) takes the Deligne product $\mathcal{C} \boxtimes \mathcal{D}$ with $\mathcal{D}$ suitably chosen and the (2) condensation of a certain algebra of bosons in $\mathcal{C} \boxtimes \mathcal{D}$.

This two-step process of stacking and condensing appears in both settings and seems to be responsible for producing some of the same examples: the topological orders in Kitaev's 16-fold way can be produced via the condensed fiber product (in this case zesting) as well the hierarchy construction. However, the constructions generalize one another is somewhat orthogonal directions and each is capable of producing examples that is beyond the scope of the other. On one hand, zesting is realized by means of the condensed fiber product of two (for us bosonic) topological orders $\mathcal{C}$ and $\mathcal{D}$ that share a local symmetry $\Rep(G,z)$ and $\mathcal{D}$ is pointed with minimal dimension and results in a topological order with the same rank as $\mathcal{C}$. On the other hand, the selection of an appropriate $\mathcal{D}$ in performing the hierarchy construction on $\mathcal{C}$ with respect to $\mathcal{D}$ requires some properties to be satisfied by their filling fractions and $U(1)$-symmetry fractionalization patterns. It would be worthwhile to clarify when these two constructions agree since this would suggest an interpretation of the condensed fiber product as a kind of generalized condensation as suggested in \cite{CarolynZhang}. \\

There are two natural generalizations of the version of the condensed fiber product construction presented above, which are similarly closely related to various existing notions in tensor category theory and its applications to condensed matter theory:  

First, suppose that \( \mathcal{C} \) and \( \mathcal{D} \) are non-degenerate braided fusion categories both containing a (not necessarily pointed) symmetric category \( \mathcal{F} \cong \operatorname{Rep}(G, z) \). Then the work of \cite{DNO} shows that \( \mathcal{C} \boxtimes \mathcal{D} \) contains a condensible algebra object \( A \cong \operatorname{Fun}(G) \), and the category of local \( A \)-modules, \( (\mathcal{C} \boxtimes \mathcal{D})_A^{\text{loc}} \), is again non-degenerate and contains \( \mathcal{F} \) as a subcategory. This construction is considered in detail in \cite[Section 4]{LanKongWen}.

Second, observe that since \( \mathcal{B}_z \subset \mathcal{C} \), we have a canonical \( B \)-grading of \( \mathcal{C} \) given by
\[
\mathcal{C}_b := \langle X : c_{X, \varphi} c_{\varphi, X} = \varphi(b) \operatorname{Id}_{\varphi \otimes X} \;\text{for all}\; \varphi \in \hB \rangle.
\]  Moreover, since \( \mathcal{B}_z \) is symmetric, we see that \( \mathcal{C}_e \cap (\mathcal{C}_e)^\prime \supset \mathcal{B}_z \), and the non-degeneracy of \( \mathcal{C} \) implies that the grading is faithful. Finally, one computes that the centralizer \( \nabla(\mathcal{B}_z)^\prime \subset \mathcal{C} \boxtimes \mathcal{D} \) of \( \nabla(\mathcal{B}_z) \) is the so-called \emph{fiber product} \cite{Nik2018},
\[
\mathcal{C} \stackrel{B}{\boxtimes} \mathcal{D} = \bigoplus_{b \in B} \mathcal{C}_b \boxtimes \mathcal{D}_b,
\]
and thus \( [(\mathcal{C} \boxtimes \mathcal{D})_B]_e \cong (\mathcal{C} \stackrel{B}{\boxtimes} \mathcal{D})_B \). Therefore, we can apply the above construction more generally to \( G \)-crossed braided fusion categories \( \mathcal{C} \) and \( \mathcal{D} \) such that \( \mathcal{B}_z \subset \mathcal{C}_e \cap (\mathcal{C}_e)^\prime \), and similarly for \( \mathcal{D} \). That is, we construct the \( G \)-graded category \( (\mathcal{C} \stackrel{G}{\boxtimes} \mathcal{D})_B \), which we call the \emph{condensed fiber product}. This is the main topic of this article. Remarkably, we find that \( (\mathcal{C} \stackrel{G}{\boxtimes} \mathcal{D})_B \) is again a \( G \)-crossed braided fusion category containing \( \mathcal{B}_z \), hinting that this class of categories forms a monoidal 2-category.

The paper is organized as follows. In Section~\ref{sec:preliminaries}, we provide the necessary preliminaries on braided fusion categories, $G$-crossed braided fusion categories, and the zesting construction. In Section~\ref{sec:categories}, we introduce the categories $\cT(G, \cB_z)$ (and $\cT^{nd}(B, \cB_z)$) of $G$-crossed extensions of braided categories with a transparent symmetric pointed category $\cB_z$ and discuss their basic properties, along with detailed examples illustrating their construction. We also introduce the condensed fiber product and show that it endows $\TGB$ and $\ndTBB$ with a monoidal structure. In Section~\ref{sec:zesting_cfp}, we explore the relationship between zesting and the condensed fiber product, demonstrating that the condensed fiber product can be realized as a zesting in the case of pointed fusion extensions. Finally, in Section~\ref{sec:discussion}, we discuss open problems and potential directions for future research, such as extensions of the condensed fiber product to more general settings and its relation to generalized zesting.

\section{Preliminaries}\label{sec:preliminaries}
We refer to \cite{EGNObook} for the definitions of braided/symmetric tensor, fusion, and (bi-)module categories, which we use freely in this paper. A fusion category is \emph{pointed} if every simple object is $\otimes$-invertible. The group of isomorphism classes of invertible objects in a fusion category $\cC$ is denoted by $\Inv(\cC)$. To fix notation, we briefly review the cohomological description of pointed (braided) fusion categories associated with finite (abelian) groups and group cohomology (e.g., \cite{EGNObook,JS93}).

For a finite group $G$ and a $3$-cocycle $\omega\in Z^3(G,\K^\times)$, we denote by $\cC(G,\omega)$ the fusion category of $G$-graded vector spaces with simple objects $\K_g$ for $g\in G$, where the tensor product has an associator given by $\omega$.

For a finite abelian group $B$ and an abelian $3$-cocycle $(c,\omega)\in Z_{\mathrm{ab}}^3(B,\K^\times)$, the category $\cC(B,\omega)$ has a braiding defined by $c$. The map $q_c: B \to \K^\times$, given by $b \mapsto c(b,b)$, is a quadratic form, meaning $q(b^{-1}) = q(b)$, and the map $q(a+b)q(a)^{-1}q(b)^{-1}$ is bimultiplicative. The quadratic form $q_c$ determines the cohomology class of the associated abelian $3$-cocycle. Thus, we denote by $\cC(B,q)$ a pointed braided fusion category with $\Inv(\cC) = B$ and quadratic form $q$.

When $q$ is a group homomorphism, the category $\cC(B)$ with braiding given by the bicharacter
\[
c_q(a,b) = 
\begin{cases}
-1 & \text{if } q(a) = q(b) = -1, \\
1 & \text{otherwise},
\end{cases}
\]
is a symmetric fusion category, denoted by $\cC(B,q)$. Conversely, every pointed symmetric fusion category has this form.

The categories $\cC$ we study will be (faithfully) graded by a finite group, that is, $\cC=\bigoplus_{g\in G} \cC_g$ as an abelian category with each $\cC_g$ non-trivial and for $X_g\in\cC_g,Y_h\in\cC_h$ we have $X_g\otimes Y_h\in \cC_{gh}$.

\begin{definition}
\label{def:gcrossedbraided}
Let $G$ be a finite group. A (right) $G$-crossed braided category is a fusion category $\cC$ with
\begin{enumerate}
\item  a $G$-grading $\cC = \oplus_{g \in G}\, \cC_g$, 
\item a {\it right} $G$-action,  that is, a monoidal functor $T: \underline{G^{op}} \to \underline{\Aut_{\otimes}(\cC)}$, denoted $T(g)=g^*$, such that $g^*(\cC_g) \subset \cC_{h^{-1}gh}$ for all $g,h\in G$,  
\item a $G$-braiding consisting of natural isomorphisms $c_{X,Y_h}: X \otimes Y_h \to Y_h \otimes h^*(X)$ for all $X\in \cC$, $Y_h\in \cC_h$, satisfying the right action analogues of the axioms in \cite[Definition 8.24.1 (a)-(c)]{EGNObook}. 
\end{enumerate}     
\end{definition}

We will use the notation $X^g:=g^*(X)$ for the (right) $G$-action on objects and $f^g:=g^*(f)$ for the $G$-action on morphisms.

The following product on $G$-graded categories will be important:
\begin{definition}[\cite{BBCW,Nik2018}] 
Let $\cC$ and $\cD$ be $G$-graded categories.  The \emph{fiber product} of $\cC$ and $\cD$ is the $G$-graded fusion category
$\CGboxD:=\bigoplus_{g\in G} \cC_g\boxtimes \cD_g$.
\end{definition}  Here $\boxtimes$ is the Deligne product of categories. 
It is clear that $\CGboxD$ is $G$-graded, via the diagonal embedding  $G\subset G\times G$, and it is not hard to see that $\CGboxD$ has a canonical structure as a $G$-crossed braided fusion category (cf. \cite[Section 6]{GreenNik}).

Next we recall the zesting construction:
\begin{definition}[Definition 3.1, Remark 3.9 in \cite{DGPRZ2}]\label{def:gxzesting}
Let $\cC$ be a $G$-crossed braided fusion category. A $G$-crossed zesting datum $(\lambda,\nu)$ on $\cC$ consists of a map $\lambda: G\times G\to \operatorname{Ob}(\cC_e)$, where each $\lambda(g_1,g_2)$ is an invertible object with $\lambda(g_1,e)=\lambda(e,g_1)=\unit$, and isomorphisms

\begin{equation}\label{eq: 2-cocycle condition}
\nu_{g_1,g_2,g_3}: \lambda(g_1,g_2)^{g_3} \otimes \lambda(g_1g_2,g_3) \to \lambda(g_2,g_3) \otimes \lambda(g_1,g_2g_3),
\end{equation}
satisfying $\nu_{g_1,e,g_2}=\id$ and the {\it G-crossed associative zesting condition}

    \begin{align}\label{eq:zestingcond}
    \begin{split}
    \left ( \nu_{g_2,g_3,g_4} \otimes \id_{\lambda(g_1,g_2g_3g_4)} \right) \circ \left ( \id_{\lambda(g_2,g_3)^{g_4}} \otimes \nu_{g_1,g_2g_3,g_4}\right ) \circ \left ( (\nu_{g_1,g_2,g_3})^{g_4} \otimes \id_{\lambda(g_1g_2g_3,g_4)})\right ) \\= \left ( \id_{\lambda(g_3,g_4)} \otimes \nu_{g_1,g_2,g_3g_4} \right ) \circ \left ( c_{\lambda(g_1,g_2)^{g_3g_4},\lambda(g_3,g_4)} \otimes \id_{\lambda(g_1g_2,g_3g_4)}\right) \circ \left ( \id_{\lambda(g_1,g_2)^{g_3g_4} \otimes \nu_{g_1g_2,g_3,g_4}} \right ),
    \end{split}
    \end{align}
    for all $g_1,g_2,g_3,g_4 \in G$.
\end{definition}

\begin{remark}
A $G$-crossed zesting datum $(\lambda,\nu)$ induces a normalized map $\overline{\lambda} : G\times G\to \Inv(\cC_e)$. The isomorphism \eqref{eq: 2-cocycle condition} shows that $\overline{\lambda}$ satisfies 
\[
\overline{\lambda}(g_1,g_2)^{g_3} \cdot \overline{\lambda}(g_1g_2,g_3) = \overline{\lambda}(g_2,g_3) \cdot \overline{\lambda}(g_1,g_2g_3),
\]
hence $\overline{\lambda} \in Z^2(G,\Inv(\cC_e))$, where $\Inv(\cC_e)$ is a $G$-module via the induced $G$-action on $\cC$. By abuse of notation, we will refer to $\lambda$ as a 2-cocycle, even though the actual 2-cocycle is the induced map on $\Inv(\cC_e)$.
\end{remark}

A $G$-crossed zesting datum generalizes an \emph{associative} zesting datum for a $G$-graded \emph{braided} fusion category $\cC$ (where $G$ is abelian), as defined in \cite{DGPRZ}. The associative zesting is recovered by viewing a $G$-graded braided fusion category as a braided $G$-crossed category with trivial $G$-action. Additionally, for $G$-graded fusion categories, there is a refinement called \emph{braided} $G$-zesting, which modifies the braiding on $\cC$ using an isomorphism $t(a,b): \lambda(a,b) \rightarrow \lambda(b,a)$ and a function $j:G\rightarrow \Aut_\otimes(\Id_\cC)$ (see \cite[Definition 4.1]{DGPRZ}). When the 2-cocycle $\lambda$ takes values in a \emph{symmetric} pointed subcategory of $\cC$, a braided zesting datum consists of a $G$-crossed braided zesting datum $(\lambda,\nu)$ along with a \emph{trivialization} of the zested $G$-action functor $T^{\lambda,\nu}$ \cite[Theorem 3.16]{DGPRZ2}.

\section{The categories $\cT(G,\cB_z)$ and $\cT^{nd}(B,\cB_z)$}\label{sec:categories}

Fix a finite group $G$, a finite abelian group $B$, and an element $z \in B$ such that $z^2 = 1$. Let $\hB$ be the abelian group of linear characters on $B$ and define a quadratic form (which is also a character) on $\hB$ by $q_z(\varphi) := \varphi(z) \in \{\pm 1\}$. Let $\cB_z := \cC(\hB,q_z)$ be the corresponding symmetric pointed braided fusion category.

Denote by $\cT(G,\cB_z)$ the category of braided $G$-crossed fusion categories $\cC$ equipped with a fully faithful braided embedding $\iota: \cB_z \to \cC_e$, such that $\cB_z \subseteq \cC_e \cap (\cC_e)'$, meaning $c_{Y,X} \circ c_{X,Y} = \id_{X \otimes Y}$ for all $X \in \cB_z$ and $Y \in \cC_e$.

\begin{definition}\label{def:equivalence braided GX}
Following \cite[Definition 8.10]{DN}, for $\cC, \cD \in \cT(B, \cB_z)$, an equivalence $F: \cC \to \cD$ of $B$-extensions $\iota_\cC,\iota_\cD$, is called an \emph{equivalence of $B$-crossed extensions of $\cB_z$} if $\iota_\cD = \operatorname{Ind}(F) \circ \iota_\cC$. Here, $\operatorname{Ind}(F): \mathcal{Z}(\cC) \to \mathcal{Z}(\cD)$ denotes the braided equivalence induced by $F$. For an alternative but equivalent definition of equivalence of braided $B$-crossed categories, see \cite{Galindo2017}.
\end{definition}

We are particularly interested in the following special case: suppose $\cC$ is a modular category or, more generally, a \emph{non-degenerate} braided fusion category, and $\cB_z \subset \cC$ appears as a braided fusion subcategory. Then $\cC$ admits a canonical faithful $B$-grading given by \[\cC_b:=\langle X: c_{X,\varphi}\circ c_{\varphi,X}=\varphi(b)\id_{\varphi\otimes X} \;\text{for all}\; \varphi\in\hB\rangle.\]
Viewing $\cC$ as a braided $B$-crossed fusion category with trivial $B$-action we can see $\cC \in \cT(B,\cB_z)$.
We denote by $\cT^{nd}(B,\cB_z)$ the subcategory of non-degenerate braided fusion categories in $\cT(B,\cB_z)$, where the $B$-grading is determined by the inclusion $\cB_z \subset \cC$. The objects of $\cT^{nd}(B,\cB_z)$ correspond precisely to the \emph{minimal modular} extensions of $\cB_z$, as defined in \cite{MugerPLMS} and \cite{LanKongWen} for a general braided fusion category.

\begin{example} The following are canonical examples of categories in $\TGB$ and $\ndTBB$.
\begin{itemize}
    \item  Consider the pointed fusion category $\Vec_G\boxtimes \cB_z$ with $G$-action $(g,\varphi)^h\mapsto (h^{-1}gh,\varphi)$ and the natural $G$-braiding: $c_{(g,\varphi),(h,\psi)}=\Id_{gh}\times c_{\varphi,\psi}$, where $c_{\varphi,\psi}$ is the braiding on $\cB_z$.  With this $G$-crossed structure $\Vec_G\boxtimes\cB_z\in\TGB$.
    \item Notice that the center $\cZ(\cB_z)$ of $\cB_z$ is a member of $\ndTBB$.  An explicit description of $\cZ(\cB_z)$ from a metric group is $\cC(\hB\times B,Q_z)$ with $Q_z(\varphi,b)=\varphi(bz)$ 
 see \cite[Proposition 5.8]{MR3107567}.  It is clear that $\cB_z\cong \langle (\varphi,e)\rangle$, yielding a $B$-crossed braided structure. Notice that, as a fusion category $\cZ(\cB_z)\equiv \Vec_B\boxtimes\cB_z$, so that on the level of fusion categories this example is equivalent to the previous example when $G=B$.
\end{itemize}
   
\end{example}

The members of $\cT(G,\cB_z)$ have a number of special properties, one of which is the following: 
\begin{theorem}\label{thm: trivial action}
For $\cC \in \cT(G, \cB_z)$, the functors $g^*$ of the $G$-action, when restricted to $\cB_z$, are tensor equivalent to the identity.
\end{theorem}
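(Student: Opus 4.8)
The plan is to establish the statement in two stages: first, that each $g^*$ fixes the isomorphism class of every object of $\cB_z$, i.e. $g^*(\varphi)\cong\varphi$ for all $\varphi\in\Inv(\cB_z)\cong\hB$; and second, that these object-isomorphisms assemble into a monoidal natural isomorphism $g^*|_{\cB_z}\cong\id_{\cB_z}$. The first stage already implies that $g^*$ preserves $\cB_z$ as a subcategory (it sends each object to one isomorphic to an object of $\cB_z$), so that ``restricting to $\cB_z$'' is meaningful.

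For the first stage the essential tool is the $G$-crossed braiding together with the invertibility of the objects of $\cB_z$. Fix $\varphi\in\cB_z\subseteq\cC_e$ and a simple object $Y_g\in\cC_g$. The crossed braiding $c_{\varphi,Y_g}\colon \varphi\otimes Y_g\to Y_g\otimes g^*(\varphi)$ gives $\varphi\otimes Y_g\cong Y_g\otimes g^*(\varphi)$, while $c_{Y_g,\varphi}\colon Y_g\otimes\varphi\to\varphi\otimes Y_g$ (valid because $\varphi\in\cC_e$, so the action on it is trivial) gives $Y_g\otimes\varphi\cong\varphi\otimes Y_g$. Combining these, $Y_g\otimes\varphi\cong Y_g\otimes g^*(\varphi)$ for every simple $Y_g\in\cC_g$; since $\varphi$ and $g^*(\varphi)$ are invertible, this says exactly that $\varphi\cdot g^*(\varphi)^{-1}$ lies in $\St(\cC_g):=\{\chi\in\hB : \chi\otimes Y_g\cong Y_g \text{ for all } Y_g\in\Irr(\cC_g)\}$. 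It remains to upgrade this to $g^*(\varphi)\cong\varphi$, and this is exactly where the transparency hypothesis $\cB_z\subseteq\cC_e\cap(\cC_e)'$ must be used in an essential way.

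I expect this elimination of the stabilizer ambiguity to be the main obstacle, since the crossed braiding by itself pins down the $G$-action on $\Inv(\cC_e)$ only modulo $\St(\cC_g)$. The cleanest route I foresee is through the equivariantization correspondence for braided $G$-crossed categories: objects of $\cC_e$ carrying trivial $G$-action correspond to objects lifting to the centralizer of $\Rep(G)$ in $\cC^G$, and I would argue that a transparent invertible object of $\cC_e$ always admits such a lift, so it cannot support a nontrivial $G$-orbit; alternatively one may attempt a direct argument that a nontrivial orbit of $\varphi$ would force nontrivial monodromy with some $\cC_g$, contradicting $\varphi\in(\cC_e)'$. Granting $g^*(\varphi)\cong\varphi$ for all $\varphi$, the second stage is routine: choosing isomorphisms $g^*(\varphi)\cong\varphi$ presents the tensor structure of $g^*|_{\cB_z}$ as a scalar $2$-cochain $J\in Z^2(\hB,\ku^\times)$, and since $g^*$ is braided while $\cB_z$ is symmetric, comparison with the scalar braiding forces $J_{\varphi,\psi}=J_{\psi,\varphi}$; as symmetric $2$-cocycles on a finite abelian group are coboundaries, $J$ is trivializable and $g^*|_{\cB_z}\cong\id_{\cB_z}$ as a monoidal functor.
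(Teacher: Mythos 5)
Your reduction to invertible objects, the stabilizer computation, and your second stage (a symmetric scalar $2$-cocycle on $\hB$ with values in $\K^\times$ is a coboundary, so a braided functor that fixes all objects of $\cB_z$ is monoidally trivializable) are all sound. But the proof has a genuine gap exactly where you flag it: you never establish $g^*(\varphi)\cong\varphi$, and neither of the two routes you sketch can close it as stated. The equivariantization route is circular: a simple object of $\cC^G$ lying over $\varphi\in\cC_e$ has underlying object a direct sum over the full $G$-orbit of $\varphi$, so $\varphi$ admits a lift to $\Rep(G)'\cap\cC^G$ only if its orbit is already trivial --- ``a transparent invertible object always admits such a lift'' is not an available lemma but a restatement of the theorem. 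The monodromy route is not even well-posed: the hypothesis $\cB_z\subseteq(\cC_e)'$ controls double braidings with $\cC_e$ only, and for $Y_g\in\cC_g$ the composite $c_{Y_g,\varphi^g}\circ c_{\varphi,Y_g}$ is a map $\varphi\otimes Y_g\to\varphi^g\otimes Y_g$ between \emph{different} objects, so there is no endomorphism whose nontriviality could contradict transparency unless one already knows $\varphi^g\cong\varphi$.

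What is missing is a structural fact pinning down the $G$-action beyond isomorphism classes of objects, and this is precisely what the paper's proof supplies. By \cite[Theorem 7.12]{ENO3} and \cite[Proposition 8.11]{DN}, the action is characterized by $\alpha^{+}(X)\cong\alpha^{-}(g^*(X))$ as $\cC_e$-module endofunctors of $\cC_g$, where $\alpha^{\pm}$ are the two module-functor structures on $X\otimes(-)$ built from $c_{X,-}$ and $c_{-,X}^{-1}$. Since $\cC_g$ is an \emph{invertible} $\cC_e$-module category, $\alpha$-induction gives an equivalence $\cC_e^{\mathrm{op}}\to\End_{\cC_e}(\cC_g)$; this is what kills the stabilizer ambiguity you ran into, because module endofunctors, unlike plain endofunctors, remember $X$ up to isomorphism. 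Transparency then gives $c_{X,Y}=c_{Y,X}^{-1}$, i.e.\ $\alpha^{+}(X)=\alpha^{-}(X)$ for $X\in\cB_z$, whence $g^*|_{\cB_z}\cong\Id$ even as a braided functor (so your stage two is subsumed). If you wish to avoid citing these results wholesale, the hexagon axioms show directly that $c_{X,-}$ is an isomorphism of module functors $\alpha^{+}(X)\Rightarrow(-)\otimes g^*(X)$ and $c_{-,X}^{-1}$ one $\alpha^{-}(X)\Rightarrow(-)\otimes X$, and invertibility of $\cC_g$ then forces $g^*(X)\cong X$; but some module-categorical input of this kind appears unavoidable, since fusion data alone (as your $\St(\cC_g)$ observation shows) cannot determine the action.
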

Theorem \ref{thm: trivial action} follows from the more general lemma below, given that $\cB_z \subset \cC_e \cap (\cC_e)^\prime$ for any $\cC \in \cT(G, \cB_z)$. In particular, when considering a $G$-crossed braided zesting of any $\cC \in \cT(G, \cB_z)$ with a $2$-cocycle $\lambda$ taking values in $\cB_z$, we can omit the superscripts in equation \eqref{eq: 2-cocycle condition} of Definition \ref{def:gxzesting}.

\begin{lemma}\label{lem:trivial action}
Let $\mathcal{B}$ be a $G$-crossed braided category and $\cD\subset \mathcal{Z}_2(\mathcal{B}_e)$ a transparent fusion subcategory. Then the functors $g^*$ of the $G$-action, when restricted to $\cD$, are tensor equivalent to the identity.
\end{lemma}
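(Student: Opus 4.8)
The plan is to produce, for each $g\in G$, a monoidal natural isomorphism $\eta^g\colon g^*|_{\cD}\Rightarrow \Id_{\cD}$, which is precisely the assertion that $g^*|_{\cD}$ is tensor equivalent to the identity. First I would record that $g^*$ is a braided tensor autoequivalence of $\mathcal{B}_e$, so it preserves the M\"uger center $\mathcal{Z}_2(\mathcal{B}_e)$ and therefore restricts to a tensor autoequivalence of $\cD$; this makes the statement well-posed. The guiding picture is that the $G$-crossed braiding realizes the action $g^*$ on the neutral component as a \emph{conjugation} by objects of degree $g$, and conjugation collapses on transparent objects. Concretely, for $X\in\cD$ and a chosen simple $Y\in\mathcal{B}_g$, I have the structure isomorphism $c_{X,Y}\colon X\otimes Y\to Y\otimes g^*(X)$, and since $g^*(X)\in\mathcal{B}_e$ is of trivial degree the braiding also supplies $c_{Y,g^*(X)}\colon Y\otimes g^*(X)\to g^*(X)\otimes Y$. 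Their composite is the crossed monodromy $\mu^Y_X:=c_{Y,g^*(X)}\circ c_{X,Y}\colon X\otimes Y\to g^*(X)\otimes Y$, and partial-tracing $\mu^Y_X$ over $Y$ via the duality maps of $Y$ defines a candidate morphism $\eta^g_X\colon g^*(X)\to X$ (up to inversion), natural in $X$ by naturality of the braiding.

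The decisive use of the hypothesis $\cD\subseteq\mathcal{Z}_2(\mathcal{B}_e)$ comes through the \emph{neutral} object $Y\otimes Y^*\in\mathcal{B}_e$, on which transparency does apply: the transparency relation $c_{Y\otimes Y^*,X}\circ c_{X,Y\otimes Y^*}=\id_{X\otimes Y\otimes Y^*}$ holds. Expanding \emph{both} factors by the two $G$-crossed hexagons (writing $c_{X,Y\otimes Y^*}=(\id_Y\otimes c_{g^*(X),Y^*})(c_{X,Y}\otimes\id_{Y^*})$ and likewise for $c_{Y\otimes Y^*,X}$ in terms of $c_{Y,X}$ and $c_{Y^*,X}$) produces a genuine relation among the crossed braidings $c_{X,Y}$ and $c_{g^*(X),Y^*}$ and their reverses — not a mere naturality tautology — which forces the monodromy $c_{Y^*,X}\circ c_{g^*(X),Y^*}\colon g^*(X)\otimes Y^*\to X\otimes Y^*$ to be an isomorphism intertwining $g^*(X)$ and $X$. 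Tracing this over $Y^*$ then exhibits $\eta^g_X$ as the required comparison morphism; using semisimplicity (and, in the intended application to $\cC\in\cT(G,\cB_z)$, the non-degeneracy of $\cC$, so that the $S$-matrix pairing is non-degenerate) I would argue that for at least one simple $Y$ the partial trace is nonzero, hence an isomorphism by Schur's lemma since $X$ and $g^*(X)$ are simple.

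It then remains to check coherence. Independence of the choice of $Y$ follows because two simple choices $Y,Y'\in\mathcal{B}_g$ are linked by morphisms of $\mathcal{B}_g$ and naturality of $c_{X,-}$ absorbs them; monoidality of $\{\eta^g_X\}$ follows from the hexagon relating $c_{X\otimes X',Y}$ to $c_{X,Y}$ and $c_{X',Y}$ together with the tensor structure of the functor $g^*$, giving $\eta^g_{X\otimes X'}=(\eta^g_X\otimes\eta^g_{X'})$ after the standard identifications. Specializing $\cD=\cB_z$ and $\mathcal{B}=\cC$, where $\cB_z\subseteq\cC_e\cap(\cC_e)'$, yields Theorem~\ref{thm: trivial action}, and in particular justifies omitting the action superscripts in \eqref{eq: 2-cocycle condition} when the zesting $2$-cocycle $\lambda$ is valued in $\cB_z$.

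The step I expect to be the main obstacle is exactly the isomorphism-and-monoidality claim, for a structural reason: transparency is a condition \emph{internal} to $\mathcal{B}_e$, whereas the action $g^*$ is only detected by braiding against the \emph{off-neutral} objects $Y\in\mathcal{B}_g$. The whole argument hinges on routing that off-neutral information through the neutral object $Y\otimes Y^*$, so the delicate part is the hexagon bookkeeping — carefully tracking the associativity constraints and the right-action convention $a^*b^*=(ba)^*$ — and confirming that the trivialization so obtained is both independent of $Y$ and compatible with the composition law $\eta^{g}$, $\eta^{h}$ versus $\eta^{gh}$ dictated by the $G$-action. I would expect these verifications to be delicate but ultimately formal once the key relation coming from transparency is in hand.
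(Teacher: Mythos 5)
Your derivation of the key relation is sound: $Y\otimes Y^*$ does lie in $\mathcal{B}_e$, so transparency applies to it, and expanding $c_{Y\otimes Y^*,X}\circ c_{X,Y\otimes Y^*}=\id$ by the two crossed hexagons yields the genuine constraint
$\id_Y\otimes\bigl(c_{Y^*,X}\circ c_{g^*(X),Y^*}\bigr)=\bigl(c_{Y,X}^{-1}\circ c_{X,Y}^{-1}\bigr)\otimes\id_{Y^*}$,
i.e.\ a morphism of the form $\id_Y\otimes f$ coincides with one of the form $h\otimes\id_{Y^*}$. The gap is in how you extract $g^*(X)\cong X$ from this. Your route --- partial-trace $f$ over $Y^*$ and argue the trace is nonzero --- rests on (i) simplicity of $X$ and $g^*(X)$, which is not a hypothesis on objects of $\cD$, (ii) a pivotal/spherical structure needed even to define the traces canonically (closing a strand pairs left against right duals), and above all (iii) non-degeneracy of $\cC$, invoked ``in the intended application.'' But the lemma has no non-degeneracy hypothesis, and the intended application (Theorem \ref{thm: trivial action}) is to \emph{all} of $\cT(G,\cB_z)$, whose members are typically highly degenerate --- e.g.\ $\Vec_G\boxtimes\cB_z$ --- so no $S$-matrix nondegeneracy is available; moreover the crossed monodromy $X\otimes Y\to g^*(X)\otimes Y$ is not an endomorphism, so its trace is not an entry of a braided $S$-matrix in the first place. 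As written, your argument proves the lemma only under extra hypotheses that exclude precisely the cases the paper needs.

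The repair, staying within your strategy, is to replace the trace by an interchange argument: in a fusion category, an identity $\id_Y\otimes f=h\otimes\id_{Y^*}$ forces $f=\phi\otimes\id_{Y^*}$ and $h=\id_Y\otimes\phi$ for a (unique) $\phi\colon g^*(X)\to X$, which is then automatically an isomorphism because $f$ is --- no traces, no non-degeneracy, no simplicity. That interchange statement is the real missing lemma and itself requires proof (via adjunction/semisimplicity of Hom-spaces); afterwards you would still need to verify naturality and monoidality of $\phi$ by hand, as you acknowledge. For comparison, the paper's proof bypasses all of this bookkeeping: it cites the characterization of the $G$-action by $\alpha$-induction on the invertible $\mathcal{B}_e$-module $\mathcal{B}_g$ (namely $\alpha^+(X)\cong\alpha^-(g^*(X))$ as $\mathcal{B}_e$-module functors, per \cite{ENO3,DN}), and observes that transparency makes $\alpha^+(X)=\alpha^-(X)$ on the nose, whence $g^*|_{\cD}\cong\Id_{\cD}$ --- with naturality, monoidality, and compatibility with the braiding obtained for free from that characterization. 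Your approach is more elementary in the tools it uses, but it is exactly the coherence it must rebuild by hand that the $\alpha$-induction argument supplies automatically.
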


\begin{proof}
From \cite[Theorem 7.12]{ENO3}, we know that a braided $G$-crossed extension $\mathcal{B}$ can be defined via a categorical 2-group homomorphism $BG \to B Pic(\mathcal{B}_e)$. The $G$-action on $\mathcal{B}_e$ arises from the induced action of the categorical group homomorphism via $\alpha$-induction:
\[
\partial : B Pic(\mathcal{B}_e) \to B\Aut_{\otimes} (\mathcal{B}_e).
\]

Consider the invertible $\mathcal{B}_e$-module $\mathcal{B}_g$. The $\alpha$-induction for $\mathcal{B}_g$ is characterized by the tensor equivalences:
\begin{align*}
    \alpha^{\pm}_{\mathcal{B}_g}:\mathcal{B}_e^{\mathrm{op}} \to \mathrm{End}_{\mathcal{B}_e}(\mathcal{B}_g), && \alpha^{\pm}(X)(V) = X \otimes V, && X \in \mathcal{B}_e, V \in \mathcal{B}_g,
\end{align*}
where the $\mathcal{B}_e$-module structures on $\mathcal{B}_g$, used in defining $\alpha^{\pm}$, are given by:
\begin{align*}
\alpha^+&:=c_{X,Y} \otimes \mathrm{id}_V: \alpha^{+}(X)(Y\otimes V) \to Y\otimes \alpha^{+}(X)(V), \\
\alpha^-&:=c_{Y,X}^{-1} \otimes \mathrm{id}_V : \alpha^{-}(X)(Y\otimes V) \to Y\otimes \alpha^{-}(X)(V),
\end{align*}
assuming for simplicity that $\mathcal{B}$ is strict. 

The braided tensor equivalence $g^*:\mathcal{B}_e \to \mathcal{B}_e, X\mapsto g^*(X)$ is determined by 
\begin{equation}\label{eq: condition alpha-induction an action}
\alpha^{+}(X) \cong \alpha^{-}(g^*(X)),   
\end{equation}
considering $\alpha^{\pm}$ as $\mathcal{B}_e$-module functors, see \cite[Proposition 8.11]{DN} for more details.

Now, if $\mathcal{D} \subseteq \mathcal{B}_e$ is transparent, meaning that
\[
c_{X,Y} = c_{Y,X}^{-1} \quad \text{for all} \quad Y \in \mathcal{B}_e, \, X \in \mathcal{D},
\]
then $\alpha^{+}(X) = \alpha^{-}(X)$ for any $X\in \mathcal{D}$. Therefore, equation \eqref{eq: condition alpha-induction an action} implies that $g^*|_{\cD}\cong \Id_{\cD}$ as tensor functors.

\end{proof}

\subsection{$G$-crossed and braided zestings of $\cT(G,\cB_z)$ and $\cT^{nd}(B,\cB_z)$}
Next we find that the categories in $\cT(G,\cB_z)$ and $\cT^{nd}(B,\cB_z)$ have the same sets of $G$-crossed zestings, respectively, braided $B$-zestings:

\begin{lemma}\label{lem:one datum to zest them all}
\begin{enumerate}
    \item[(a)]
    Suppose that $\cC,\cD\in\cT(G,\cB_z)$.  Then the collection of  $G$-crossed zesting data $(\lambda,\nu)$ for $\cC$ with $\lambda:G\times G\rightarrow \cB_z$ is identical to that of $\cD$.
    \item[(b)] Suppose that $\cC,\cD\in\cT^{nd}(B,\cB_z)$.  Then the collection of braided $B$-zesting data $(\lambda,\nu,t)$ \emph{with $j=\id$} and $\lambda:B\times B\rightarrow \cB_z$ for $\cC$ is identical to that of $\cD$. 
    \end{enumerate}
\end{lemma}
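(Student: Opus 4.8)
```latex
\textbf{Proof plan.}
The key observation is that a $G$-crossed zesting datum $(\lambda,\nu)$ with $\lambda:G\times G\to\cB_z$ is built entirely out of data internal to $\cB_z$ together with the ambient $G$-action and $G$-braiding \emph{restricted to $\cB_z$}. The plan is to show that these restricted structures coincide for any two members of $\cT(G,\cB_z)$, so that the defining conditions \eqref{eq: 2-cocycle condition} and \eqref{eq:zestingcond} become literally the same system of equations for $\cC$ and for $\cD$. First I would use Theorem~\ref{thm: trivial action} (equivalently Lemma~\ref{lem:trivial action}): since $\cB_z\subset\cC_e\cap(\cC_e)'$ is transparent, each functor $g^*$ restricts to the identity on $\cB_z$ up to coherent tensor isomorphism. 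This lets us drop the superscripts $(-)^{g}$ appearing on the $\lambda$-values in \eqref{eq: 2-cocycle condition}, as already noted after Theorem~\ref{thm: trivial action}. The same holds for $\cD$, so both sides have identical source and target objects for the structure isomorphisms $\nu_{g_1,g_2,g_3}$.

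Next I would pin down the remaining ambient ingredient, the $G$-braiding term $c_{\lambda(g_1,g_2)^{g_3g_4},\,\lambda(g_3,g_4)}$ appearing on the right-hand side of \eqref{eq:zestingcond}. Because both arguments now lie in $\cB_z$ (after the identification $g^*|_{\cB_z}\cong\Id$), this crossing isomorphism is simply the \emph{internal} braiding of $\cB_z$, which is the symmetric braiding $c_q$ determined by $q_z$ and is manifestly the same in $\cC$ and in $\cD$ via the fixed embeddings $\iota,\iota'$. Here is where the hypothesis $\cB_z\subseteq(\cC_e)'$ does double duty: transparency makes the action trivial on $\cB_z$, and fullness--faithfulness of $\iota$ makes the braiding on the image canonical. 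Assembling these points, the full associative zesting condition \eqref{eq:zestingcond}, viewed as an equation among isomorphisms of objects of $\cB_z$, uses only: the objects $\lambda(g_i,g_j)\in\cB_z$, tensor products and associators inside $\cB_z$, the identity-action identifications, and the internal braiding of $\cB_z$. None of these depend on $\cC$ versus $\cD$. Hence $(\lambda,\nu)$ solves the condition for $\cC$ if and only if it solves it for $\cD$, proving~(a).

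For part~(b) I would run the same argument, now inside $\cT^{nd}(B,\cB_z)$ with $G=B$ abelian and the $B$-action trivial by construction. The additional braided-zesting data is the family of isomorphisms $t(a,b):\lambda(a,b)\to\lambda(b,a)$ together with the compatibility conditions of \cite[Definition~4.1(ii)]{DGPRZ}; the stipulation $j=\id$ removes the trivialization ambiguity of the zested action functor, leaving $t$ to satisfy equations phrased purely in terms of the symmetric braiding on $\cB_z$ and the associators/tensor structure of $\cB_z$. Since $\lambda$ takes values in $\cB_z$ and the braiding on $\cB_z\hookrightarrow\cC$ agrees with that on $\cB_z\hookrightarrow\cD$, these $t$-conditions are again identical for $\cC$ and $\cD$. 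I would therefore conclude that the braided $B$-zesting data $(\lambda,\nu,t)$ with $j=\id$ and $\lambda:B\times B\to\cB_z$ coincide.

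\textbf{Main obstacle.}
The delicate point is making the phrase ``the equations are literally the same'' precise: the identification $g^*|_{\cB_z}\cong\Id_{\cB_z}$ is only an isomorphism of tensor functors, not an equality, so \emph{a priori} the $\nu$'s and the right-hand crossing term in \eqref{eq:zestingcond} are transported by different coherence isomorphisms in $\cC$ versus $\cD$. I expect the real work to be checking that this transport is canonical---i.e.\ that one may choose the identifications so that the zesting conditions become the same system without introducing $\cC$- or $\cD$-dependent correction factors. This amounts to verifying that the coherence data of the embedding $\iota$ (the tensor and braiding constraints making $\iota$ a braided functor, together with the trivialization from Lemma~\ref{lem:trivial action}) can be normalized uniformly, so that the bijection between zesting data for $\cC$ and for $\cD$ is the obvious one. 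Once this normalization is in place the remaining verification is the routine bookkeeping of matching each term in \eqref{eq:zestingcond} across the two categories.
```
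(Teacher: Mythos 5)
Your part (a) is correct and is essentially the paper's own argument: once the $G$-action on $\cB_z$ is trivialized via Theorem \ref{thm: trivial action} and the crossing term in \eqref{eq:zestingcond} is identified with the internal braiding of $\cB_z$ through the braided embedding, the conditions involve only $G$, $\cB_z$, and the (trivial) action of $G$ on $\cB_z$, so they form the same system for $\cC$ and $\cD$.

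Part (b), however, has a genuine gap. You assert that with $j=\id$ the remaining conditions on $t$ are ``phrased purely in terms of the symmetric braiding on $\cB_z$ and the associators/tensor structure of $\cB_z$.'' This is false, and it is precisely the issue the paper flags when it says that statement (b) ``requires some justification, since the braided zesting conditions in general depend on the specific categories.'' Condition (BZ1) of \cite[Definition 4.1]{DGPRZ} requires that
\[
\omega(a,b):=\chi_{\lambda(a,b)}\circ j_{ab}\circ j_a^{-1}\circ j_b^{-1}
\]
be an element of $\Aut_\otimes^B(\Id_\cC)$, where $\chi_{\lambda(a,b)}(X)$ is defined by the double braiding $c_{X,\lambda(a,b)}\circ c_{\lambda(a,b),X}$ for \emph{arbitrary} $X\in\cC$; likewise the hexagon-type conditions (BZ2) involve these double braidings on every graded component. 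This is ambient data living far outside $\cB_z$, so your internality argument does not apply. The reason the lemma nonetheless holds is that in $\cT^{nd}(B,\cB_z)$ the $B$-grading is the canonical one determined by $\cB_z$: for $X\in\cC_{b'}$ one has $\chi_{\lambda(a,b)}(X)=\lambda(a,b)(b')\,\id_X$, a scalar depending only on the character $\lambda(a,b)\in\hB$ and the degree $b'$, hence identical for every member of $\cT^{nd}(B,\cB_z)$. This is what makes (BZ1) hold with $j=\id$ (in particular $\chi_{\lambda(a,b)}|_{\cC_e}=\Id_{\cC_e}$, since $\cC_e=(\cB_z)'$) and reduces (BZ2) to equations in category-independent quantities. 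The remark following the lemma in the paper stresses that $\chi_{\lambda(a,b)}$ need \emph{not} be constant on graded components when the grading is not the canonical one, so this step cannot be waved away. Your ``main obstacle'' paragraph worries about coherence of the identification $g^*|_{\cB_z}\cong\Id$, which is relevant to (a), but it misses this point, which is the actual content of (b).
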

\begin{proof}
Statement (a) is just the observation that the definition of a $G$-crossed zesting datum only depends on the group $G$, the target subcategory $\cB_z$, and the action of $G$ on $\cB_z$.

Statement (b) requires some justification since the braided zesting conditions in general depend on the specific categories.  We must verify conditions (BZ1) and (BZ2) of  \cite[Definition 4.1]{DGPRZ}. Condition (BZ1) is
\[\omega(a,b):=\chi_{\lambda(a,b)}\circ j_{ab}\circ j_a^{-1}\circ j_b^{-1}\in\Aut_\otimes^B(\Id_\cC)\] where $\chi_{\varphi}(X)$ is defined via $c_{X,\varphi}\circ c_{\varphi,X} =\chi_\varphi(X)\otimes \id_\varphi$.
We claim that this is satisfied by $j=\id$: indeed, since $\lambda(a,b)\in\cB_z$ we have that $\chi_{\lambda(a,b)}\in\Aut_\otimes^B(\Id_\cC)$ with no adjustment needed: i.e., $\chi_{\lambda(a,b)}|_{\cC_e}=\Id_{\cC_e}$ using the fact that $\cC_e=(\cB_z)^\prime$. Now the remaining condition (BZ2) for $(\lambda,\nu,t)$ to be a braided $B$-zesting (illustrated in \cite[Figures 7 and 8]{DGPRZ}) depends only on quantities that are constant for all members of $\cT^{nd}(B,\cB_z)$.
\end{proof}
\begin{remark}\begin{itemize}
    \item 

    In the most general situation of \cite{DGPRZ} the automorphism $\chi_{\lambda(a,b)}$ may not be constant on the graded components, particularly if the grading is not the universal one.  
    \item Notice that both $\cT(G,\cB_z)$ and $\cT^{nd}(B,\cB_z)$ contain pointed categories, i.e. $\Vec_G\boxtimes \cB_z$ and $\cZ(\cB_z)$.  Thus one can determine \emph{all} $G$-crossed zesting data on $\TGB$, respectively, braided $B$-zesting data on $\ndTBB$ by means of purely cohomological computations.  
    \end{itemize}
\end{remark}


\begin{lemma}
\label{lem:allNDpointedzest}
A braided $B$-zesting of a minimal modular extension of $\cB_z$ is again a minimal modular extension of $\cB_z$, and all pointed minimal modular extensions of $\cB_z$ can be obtained by a braided $B$-zesting of $\mathcal{Z}(\cB_z)$.
\end{lemma}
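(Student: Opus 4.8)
The plan is to prove the two assertions in turn, using throughout that by Lemma~\ref{lem:one datum to zest them all}(b) the braided $B$-zesting data with $j=\id$ and $\lambda\colon B\times B\to\cB_z$ are identical for every object of $\cT^{nd}(B,\cB_z)$, so such data may be transported freely between these categories. For the first assertion, let $\cC\in\cT^{nd}(B,\cB_z)$ and let $(\lambda,\nu,t)$ be a braided $B$-zesting datum with $\lambda$ valued in $\cB_z$, producing $\cC^{(\lambda,\nu,t)}$. By \cite{DGPRZ,DGPRZ2} this is again a braided fusion category with the same underlying $B$-graded abelian category, and since each $\lambda(a,b)$ is invertible the object-wise Frobenius--Perron dimensions are unchanged, whence $\FPdim\cC^{(\lambda,\nu,t)}=|B|^2$. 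Because $\lambda(e,e)=\unit$ and $\nu,t$ are normalized, the zesting is trivial on the neutral component, so $\cC^{(\lambda,\nu,t)}_e=\cC_e$ as braided categories and the transparent embedding $\cB_z\hookrightarrow\cC_e$ survives. Two points then remain: non-degeneracy, which I would obtain from the fact that braided zesting transforms the modular data invertibly and hence preserves invertibility of the $S$-matrix \cite{DGPRZ}; and the identification of the $B$-grading, which holds because the double braiding $c_{X,\varphi}c_{\varphi,X}$ with $\varphi\in\cB_z$ is unchanged by a zesting whose cocycle lies in the transparent subcategory $\cB_z$, so the homogeneous components $\langle X:c_{X,\varphi}c_{\varphi,X}=\varphi(b)\,\id\rangle$ are preserved. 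Together these give $\cC^{(\lambda,\nu,t)}\in\cT^{nd}(B,\cB_z)$.

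For the second assertion I would first describe pointed minimal modular extensions intrinsically as metric groups. Such a $\cP$ is $\cC(A,q)$ for a finite abelian group $A$ with $|A|=|B|^2$ and non-degenerate quadratic form $q$, and the inclusion $\cB_z=\cC(\hB,q_z)\subset\cP$ realizes $\hB$ as an isotropic subgroup (isotropic since $\cB_z$ is symmetric) with $q|_{\hB}=q_z$. Minimality forces $\hB^{\perp}=\hB$, i.e.\ $\hB$ is Lagrangian, so $\cP_e=\cB_z$ and $A/\hB\cong B$ by Pontryagin duality. Thus a pointed minimal modular extension is exactly an abelian extension
\begin{equation*}
1\longrightarrow \hB\longrightarrow A\longrightarrow B\longrightarrow 1
\end{equation*}
together with a non-degenerate quadratic form $q$ restricting to $q_z$ on $\hB$ and inducing the canonical evaluation pairing of $\hB$ with $A/\hB\cong B$; the model $\cZ(\cB_z)=\cC(\hB\times B,Q_z)$ is precisely the split case.

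The plan is then to realize an arbitrary such $\cP$ by zesting $\cZ(\cB_z)$. Computing the zested fusion on $\cZ(\cB_z)$ gives $(\varphi,a)\otimes^{\lambda}(\psi,b)=(\varphi\psi\,\lambda(a,b),ab)$, so choosing a symmetric $\lambda\in Z^2(B,\hB)$ whose class in $\Ext(B,\hB)$ equals that of $A$ recovers the group $A$, hence the fusion rules of $\cP$, from the split group $\hB\times B$. The braiding datum $t$ then adjusts the self-braidings along a section $B\to A$ and $\nu$ records the associated associativity correction, so as to turn the abelian $3$-cocycle of $\cZ(\cB_z)$ into that of $\cP$; since both categories are pointed, every morphism appearing in the zesting conditions is a scalar and the problem becomes purely cohomological over $B$ with coefficients in $\hB$ and $\K^{\times}$.

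The hard part will be this last step: showing that the chosen $\lambda$ genuinely extends to a full \emph{braided} zesting datum $(\lambda,\nu,t)$ whose output is $\cP$, rather than merely a category with the correct fusion rules. Concretely, one must verify that the obstruction to solving the $G$-crossed associative zesting condition~\eqref{eq:zestingcond} together with (BZ1)--(BZ2) vanishes and that the resulting quadratic form is exactly $q$; equivalently, that the assignment from braided $B$-zesting data valued in $\cB_z$ to pointed minimal modular extensions is surjective. I would settle this by comparing the two classifications directly, noting that the abelian cocycle data $(b_q,q)$ of a pointed extension and the braided zesting data $(\lambda,\nu,t)$ are governed by the same cochain complex, with the split datum on $\cZ(\cB_z)$ as basepoint, so that the zesting action is transitive on the relevant set.
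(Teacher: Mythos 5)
Your first assertion is handled correctly: the preservation of $\FPdim$, of the trivial component, and of the $B$-grading are right, and non-degeneracy is exactly \cite[Proposition 5.12]{DGPRZ}, which is the same fact the paper records as Lemma~\ref{lem:allBzestNDareND}. The genuine gap is in the second assertion, and it sits precisely where you flag ``the hard part.'' Having chosen $\lambda\in Z^2(B,\hB)$ representing the extension class of $\Inv(\cP)$, you must produce $(\nu,t)$ completing $\lambda$ to a braided $B$-zesting datum whose output is braided equivalent to $\cP$; this existence statement \emph{is} the content of the lemma. Your proposed resolution --- that the abelian cocycle data of $\cP$ and the braided zesting data are ``governed by the same cochain complex, \ldots so that the zesting action is transitive on the relevant set'' --- is an unproven restatement of the desired surjectivity, so the argument is circular as written. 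The paper closes exactly this step by citing Nikshych's exact sequence \cite[Equation (58)]{Nik2022}, $H^3_{ab}(A,\K^\times)\xrightarrow{\iota} Mext_{pt}(\cB_z)\xrightarrow{\lambda} H^2_{ab}(B,\hB)\xrightarrow{PW^2} H^4_{ab}(B,\K^\times)$: since $\cP$ exists, its class lies in $\ker PW^2$, and vanishing of $PW^2(\lambda)$ means \emph{by definition} that some $(\omega,c)$ makes $(\lambda,\omega,c)$ a braided zesting datum; exactness at $Mext_{pt}(\cB_z)$ (the image of $\iota$ being realized by zestings with trivial $\lambda$) then accounts for all extensions sharing a given class. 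Alternatively, you could close the gap by hand, as the paper does later in Lemma~\ref{lem:gcrossedpointed} and the remark following it: choose a section $Z\colon B\to\Inv(\cP)$, read off $\alpha$ from $Z_gZ_h=Z_{gh}\alpha(g,h)$, obtain $\nu$ from associativity and $t$ from the isomorphisms $Z_gZ_h\cong Z_hZ_g$, and verify the zesting axioms using the pentagon and hexagon axioms of $\cP$. Either route supplies the missing mathematics; your proposal supplies neither.

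A secondary but real error: you assert that $\hB\subset A$ is \emph{isotropic} ``since $\cB_z$ is symmetric.'' This is false whenever $z\neq e$: symmetry of $\cB_z$ only says the associated bilinear form $b_q$ is identically $1$ on $\hB\times\hB$, i.e.\ $\hB\subseteq\hB^{\perp}$, while $q|_{\hB}=q_z$ takes the value $-1$ on every $\varphi$ with $\varphi(z)=-1$. So $\hB$ is self-orthogonal for $b_q$ but not isotropic for $q$; correspondingly $\cB_z$ is super-Tannakian rather than Tannakian, and cannot itself be condensed --- which is exactly why the paper condenses $\nabla(\cB_z)\cong\Rep(B)$ rather than $\cB_z$. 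Your structural conclusions ($\hB^{\perp}=\hB$ with respect to $b_q$, and $A/\hB\cong B$) survive once ``isotropic/Lagrangian'' is replaced by this weaker, correct condition, but the slip matters in any argument that invokes condensation or de-equivariantization along $\hB$.
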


\begin{proof}

The set of equivalence classes of pointed minimal modular extensions of $\cB_z$, denoted by $Mext_{pt}(\cB_z)$, has an abelian group structure. In \cite[Equation (58)]{Nik2022}, the following exact sequence was constructed:

\[
H^3_{ab}(B, \K^\times) \xrightarrow{\iota} Mext_{pt}(\cB_z) \xrightarrow{\lambda} H_{ab}^2(B, \hB) \xrightarrow{PW^2} H_{ab}^4(B, \K^\times),
\]
where $\lambda$ corresponds to the cohomology associated with the abelian extension 
\[
0 \to \hB \to \operatorname{Inv}(\cP) \overset{\pi}{\to} B \to 0,
\]
with $\pi$ corresponding to the $B$-grading. 

The map $PW^2$ is the Pontryagin-Whitehead homomorphism defined in \cite{Nik2022}. By the definition of $PW^2$, for a given $\lambda \in Z^2_{ab}(B, \hB)$, the image $PW^2(\lambda)$ has trivial cohomology if there exist $\omega \in C^3(B, \K^\times)$ and $c \in C^2(B, \K^\times)$ such that $(\lambda, \omega, c)$ is a braided zesting datum. In this case, the zesting of $\mathcal{Z}(\cB_z)$ associated with $(\lambda, \omega, c)$ corresponds to the minimal modular extension.

\end{proof}

In fact, the following is immediate from \cite[Proposition 5.12]{DGPRZ}:
\begin{lemma}\label{lem:allBzestNDareND}
 Let $\cC\in \cT^{nd}(B,\cB_z)$. Then $\cC^{(\lambda,\nu,t)}$ is non-degenerate for any braided $B$-zesting datum $(\lambda,\nu,t)$ with $j=\id$.
 \qed
\end{lemma}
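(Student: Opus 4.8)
The plan is to deduce the statement directly from \cite[Proposition 5.12]{DGPRZ}, which controls the M\"uger center under braided zesting. The first step is to recall that a braided $B$-zesting deforms only the associativity and braiding constraints of $\cC$: the underlying fusion category, the isomorphism classes of simple objects, the fusion rules, and the faithful $B$-grading $\cC=\bigoplus_{b\in B}\cC_b$ are all unaffected. Consequently, the non-degeneracy of $\cC^{(\lambda,\nu,t)}$ is purely a statement about the deformed braiding, namely that its M\"uger center is trivial; equivalently, that the zesting introduces no new transparent simple objects.

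Next I would record how the datum $(\lambda,\nu,t)$ acts on the double braiding. For $X\in\cC_a$ and $Y\in\cC_b$, the braided zesting modifies $c_{X,Y}$ using the isomorphism $t(a,b)\colon\lambda(a,b)\to\lambda(b,a)$, so the deformed double braiding $c^{(\lambda,\nu,t)}_{Y,X}\circ c^{(\lambda,\nu,t)}_{X,Y}$ differs from $c_{Y,X}\circ c_{X,Y}$ only by the contribution of $t(a,b)$ and $t(b,a)$. Because $\lambda$ takes values in the transparent symmetric subcategory $\cB_z$, these contributions act by scalars depending only on the degrees $a,b\in B$; together with the normalization $j=\id$ (so that no further correction enters, cf.\ the verification of condition (BZ1) in Lemma \ref{lem:one datum to zest them all}), this places us exactly in the regime treated in \cite[Proposition 5.12]{DGPRZ}.

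With these reductions in place, the conclusion is immediate: \cite[Proposition 5.12]{DGPRZ} shows that, under precisely these hypotheses, a braided zesting of a non-degenerate braided fusion category is again non-degenerate, because the deformation by $t$ preserves triviality of the M\"uger center. Applying this to $\cC\in\cT^{nd}(B,\cB_z)$ and the datum $(\lambda,\nu,t)$ with $j=\id$ gives that $\cC^{(\lambda,\nu,t)}$ is non-degenerate. The only real obstacle is bookkeeping rather than conceptual: one must check that the conventions used here — the right $B$-action, the normalization $j=\id$, and $\lambda$ valued in $\cB_z$ — match the hypotheses of \cite[Proposition 5.12]{DGPRZ} verbatim, so that its computation of the M\"uger center applies without modification. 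Once this dictionary is fixed, there is nothing more to prove.
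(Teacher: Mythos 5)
Your proposal takes essentially the same route as the paper: the paper offers no proof beyond declaring the lemma ``immediate from [DGPRZ, Proposition 5.12],'' and your argument is exactly that citation together with a check that the hypotheses ($\lambda$ valued in the transparent subcategory $\cB_z$, normalization $j=\id$) are met. One correction to your supporting discussion, however: it is not true that zesting leaves the fusion rules and the fusion category structure unaffected --- the zested tensor product is $X\hat{\otimes}Y=(X\otimes Y)\otimes\lambda(a,b)$ for $X\in\cC_a$, $Y\in\cC_b$, and the paper itself notes (in the remark preceding this lemma) that zesting $\Vec_{\Z/p\times\Z/p}$ with $\lambda\neq 0$ produces $\Z/p^2$ fusion rules; only the underlying abelian category, its simple objects, and the $B$-grading are preserved. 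This inaccuracy does not sink your proof, since the M\"uger-center analysis is carried entirely by the cited proposition rather than by your reduction, but the claim as you phrase it should be amended.
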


Therefore, not every pointed braided category $\cP$ in $\cT(B, \cB_z)$ with $\cP_e\cong \cB_z$ is braided equivalent to a braided $B$-zesting of $\cZ(\cB_z)$. For example, no symmetric category satisfies this equivalence. A complete description of the braided zestings of $\operatorname{Vec}_B \boxtimes \cB_z$ is provided in \cite[Section 8.7]{DN}. Specifically, the braided zestings of $\operatorname{Vec}_B \boxtimes \cB_z$ correspond precisely to the braided quasi-trivial $B$-extensions of $\cB_z$. Since $\cB_z$ is pointed, these quasi-trivial extensions reduce to pointed braided $B$-extensions of $\cB_z$. 

According to \cite[Section 8.7]{DN}, each quasi-trivial braided $B$-extension of $\cB_z$ has an associated group homomorphism $f: B \to \operatorname{Aut}_{\otimes}(\operatorname{Id}_{\cB_z})$, where $\operatorname{Aut}_{\otimes}(\operatorname{Id}_{\cB_z}) \cong \widehat{B}$ is the group of tensor automorphisms of the identity functor on $\cB_z$. Equivalently, this corresponds to a bicharacter $f: B \times B \to \mathbb{C}^*$. Moreover, two braided $B$-extensions with homomorphisms $f_1$ and $f_2$ are related by a braided $B$-zesting if and only if $f_1 = f_2 \cdot \chi_Z$, where $\chi_Z$ is the character corresponding to $Z \in \operatorname{Inv}(\cB_z)$.

This observation contrasts with the case of pointed braided $B$-crossed extensions of $\cB_z$, which, as we will see as a consequence of a special case of Corollary \ref{lem: pointed is trivial homomorphism} in Section \ref{sec:zesting_cfp}, are all related by $B$-crossed zestings.

As an example of the contrast, consider the braided $B$-crossed extensions associated with $\operatorname{Vec}_B \subset \operatorname{Vec}_{B \times \widehat{B}}$, with inclusion map $x \mapsto (x, 0)$. First, we observe that
\[
\operatorname{Vec}_{B \times \widehat{B}} \subset \mathcal{Z}(\operatorname{Vec}_{B \times \widehat{B}}) = \operatorname{Vec}_{B \times \widehat{B} \times \widehat{B} \times B}, \quad (a, \alpha) \mapsto (a, \alpha, 0, 0),
\]
when $\operatorname{Vec}_{B \times \widehat{B}}$ is regarded as a Tannakian category. Alternatively, we consider
\[
\operatorname{Vec}_{B \times \widehat{B}} \subset \mathcal{Z}(\operatorname{Vec}_{B \times \widehat{B}}), \quad (a, \alpha) \mapsto (a, \alpha, \alpha, a),
\]
when $\operatorname{Vec}_{B \times \widehat{B}}$ is regarded as the Drinfeld center of $\operatorname{Vec}_B$. Thus, the associated braided $B$-crossed extensions have $B$-central extension structures $\operatorname{Vec}_B \to \operatorname{Vec}_{B \times \widehat{B}}$, $x \mapsto (x, 0)$, corresponding to:
\begin{align*}
   \iota &: \operatorname{Vec}_{B} \subset \mathcal{Z}(\operatorname{Vec}_{B \times \widehat{B}}), \quad a \mapsto (a, 0, 0, 0), \\
   \iota' &: \operatorname{Vec}_{B} \subset \mathcal{Z}(\operatorname{Vec}_{B \times \widehat{B}}), \quad a \mapsto (a, 0, 0, a).
\end{align*}

We want to see that those braided $G$-crossed extensions are equivalent (see Definition \ref{def:equivalence braided GX}). If we consider the identity functor of $\operatorname{Vec}_{B \times \widehat{B}}$ with tensor structure induced by the 2-cocycle $\langle -,- \rangle \in Z^2(B \times \widehat{B}, \mathbb{C}^*)$ given by $\langle (a,\alpha), (b,\beta) \rangle = \beta(a)$, then the induced braided autoequivalence of $\mathcal{Z}(\operatorname{Vec}_{B \times \widehat{B}})$ is
\begin{align*}
    \operatorname{Ind}(\langle -,- \rangle): \mathcal{Z}(\operatorname{Vec}_{B \times \widehat{B}}) \to \mathcal{Z}(\operatorname{Vec}_{B \times \widehat{B}}), \\
    (a, \alpha, \beta, b) \mapsto (a, \alpha, \beta - \alpha, a + b).
\end{align*}
Hence, $\operatorname{Ind}(\langle -,- \rangle) \circ \iota(a) = (a, 0, 0 - 0, a + 0) = (a, 0, 0, a) = \iota'(a)$. Therefore, the two braided $B$-crossed extensions are equivalent.

\subsection{The condensed fiber product}

If $\cC,\cD\in\cT(G,\cB_z)$ then $(\CGboxD)_e\supset \cB_z\boxtimes\cB_z$. Notice that $\cB_z\boxtimes\cB_z$ is a symmetric pointed braided fusion category with quadratic form $Q_z(\varphi,\psi):=\varphi(z)\psi(z)$.  Denote by $\nabla(\cB_z)$ the subcategory generated by objects of the form $(\varphi,\varphi^{-1})$.  Now $\nabla(\cB_z)\cong\Rep(B)$ is Tannakian since $Q_z(\varphi,\varphi^{-1})=\varphi(z)\varphi^{-1}(z)=1$.  Thus, we may de-equivariantize by $\nabla(\cB_z)$, see \cite[Definition 4.16]{DGNOI}.   This motivates:
\begin{definition}
    Let $\cC,\cD\in\cT(G,\cB_z)$.  The \textbf{condensed fiber product} $[\CGboxD]_B$ of $\cC$ and $\cD$ is the $B$-de-equivariantization of $\CGboxD$ via $\nabla(\cB_z)\cong \Rep(B)$.
\end{definition}
\begin{remark}
    In the case of $\cC,\cD\in\cT^{nd}(B,\cB_z)$ we have the following natural interpretation.  The Tannakian subcategory  $\nabla(\cB_z)\subset \cC\boxtimes\cD$ has centralizer precisely $\nabla(\cB_z)^\prime =\CGboxD$, so that the corresponding \emph{local} modules of $\Fun(B)$ in $\cC\boxtimes\cD$, \emph{i.e.}, the condensation of $\nabla(\cB_z)$, is exactly the condensed fiber product $[\cC\stackrel{B}{\boxtimes}\cD]_B$.
\end{remark}
It follows from results of \cite{DGNOI} and the above remark that if $\cC,\cD\in\cT^{nd}(B,\cB_z)$ then $[\cC\stackrel{B}{\boxtimes}\cD]_B$ is again non-degenerate.  We illustrate this construction with two simple examples.

\begin{example}\label{ex:su24z4}
    Consider the (non-degenerate) braided fusion category $\cC=SU(2)_4$.  This has objects $\unit,z,Y,X_1,X_{-1}$ where $\dim(X_i)=\sqrt{3}$, $\dim(Y)=2$, and $z$ is invertible. The objects $X_i$ are self-dual with \[X_i\otimes X_j=Y\oplus \delta_{i,j}\unit\oplus\delta_{i,-j}z.\] The subcategory $\langle z\rangle\cong \Rep(\Z/2)$ is symmetric and pointed and the  components of the induced $\Z/2$-grading have simple objects $\{\unit,z,Y\}$ (in $\cC_0$) and $\{X_1,X_{-1}\}$ (in $\cC_1$).  In particular, $\cC\in\cT(\Z/2,\Rep(\Z/2))$, where the $\Z/2$-action is trivial and the ($\Z/2$-crossed) braiding is the given braiding.  Another such category is $\Vec_{\Z/4}$, which we identify with $\Z/4$, written multiplicatively as $\langle g\rangle$.  The condensed fiber product $[\cC\stackrel{\Z/2}{\boxtimes}\Vec_{\Z/4}]_{\Z/2}$ is obtained from $\cC\stackrel{\Z/2}{\boxtimes}\Vec_{\Z/4}$ by condensing the subcategory generated by $(z,g^2)$.  The fiber product has $10$ objects, forming $5$ orbits under the $(z,g^2)\otimes -$ action, yielding $5$ simple objects, which we denote by $[X,1]$ in the trivial component and $[X,g]$ in the non-trivial component.  We can compute some fusion rules: \[[X_1,g]\otimes[X_1,g]=[X_1\otimes X_1,g^2]=[X_1\otimes X_1\otimes z,1]=[Y,1]\oplus[z,1].\]  In particular, the object $[X_1,g]$ is non-self-dual, with $[X_{-1},g]$ representing its dual.  Note the contrast with the category $\cC$, whose objects are all self-dual. Moreover, $[\cC\stackrel{\Z/2}{\boxtimes}\Vec_{\Z/4}]_{\Z/2}$ does not admit a braiding (see, e.g. \cite{BruillardOrtiz}).  We shall see below that it is still $\Z/2$-crossed braided, however.
\end{example}
\begin{example}\label{ex:z4}
   Consider the non-degenerate pointed category $\cP_4:=\cC(\Z/4,q)$ where $q(g^a):=e^{\pi ia^2/4}$ where we write $\Z/4$ multiplicatively with generator $g$.  The subcategory generated by $g^2$ is $\sVec$ since $q(g^2)=-1$, so that $\cP_4\in\cT^{nd}(\Z/2,\sVec)$. Thus we may construct the condensed fiber product $[\cP_4\stackrel{\Z/2}{\boxtimes}\cP_4]_{\Z/2}$ by condensing the diagonal subcategory $\langle (g^2,g^2)\rangle$ of $[\cP_4\stackrel{\Z/2}{\boxtimes}\cP_4]$.  We can calculate the result directly, or deduce it as follows: the resulting category is a non-degenerate pointed braided fusion category of dimension $4$, containing $\sVec$ with central charge $(e^{\pi i/4})^2=i$.  There is only one such category, namely $\Sem\boxtimes\Sem$.  
\end{example}

The following is one of our main results:

\begin{theorem}\label{thm:closed under cfp}
    The categories \( \cT(G,\cB_z) \) and \( \cT^{nd}(B,\cB_z) \) are closed under the operation of the condensed fiber product.
\end{theorem}

\begin{proof} 
    Let $A=\Fun(B)$ be the algebra object associated with the Tannakian category $\Rep(B)$, and define $\cF:=[\CGboxD]_B$.
Let $X$ and $Y$ be left $A$-modules with actions $\mu_X$ and $\mu_Y$ respectively, where $X\in \cF_e$ and $Y\in \cF_g$ as objects.  Recall from \cite{KiO} that the $A$-action $\mu_{X\otimes Y}$ on $X\otimes_A Y$ is given by either of $\mu_1:=\mu_X\otimes \Id_Y$ or $\mu_2:=(\Id_X\otimes \mu_Y)\circ (c_{A,X}\otimes \Id_Y)$.  Since $A\in (\cF_e)^\prime\cap \cF_e$ we have that $c_{A,X}c_{X,A}=\Id_{X\otimes A}$.  We must show that \begin{equation}\label{modulebraid}
    c_{X,Y}\circ(\Id_X\otimes \mu_Y)\circ (c_{A,X}\otimes \Id_Y)=(\mu_Y\otimes \Id_X)\circ (\Id_A\otimes c_{X,Y})
\end{equation} where we are using $\mu_{X\otimes Y}=\mu_2$ here.  We compose both sides of \eqref{modulebraid} with the isomorphism $c_{X,A}\otimes \Id_Y$ and compare as follows:
\[(c_{X,Y}\circ(\Id_X\otimes \mu_Y)\circ (c_{A,X}\otimes \Id_Y))\circ (c_{X,A}\otimes \Id_Y)=c_{X,Y}\circ(\Id_X\otimes \mu_Y)\]
while 
\[
    ((\mu_Y\otimes \Id_X)\circ (\Id_A\otimes c_{X,Y}))\circ (c_{X,A}\otimes \Id_Y)=(\mu_Y\otimes \Id_X)\circ (c_{X,A\otimes Y})=
c_{X,Y}\circ (\Id_X\otimes\mu_Y)
\]
where the last equality is due to functoriality of the braiding.  Thus we have verified \eqref{modulebraid}.  This implies that $(\cF_e)_A$ has a half-braiding with $(\cF_g)_A$ for all $g$; in particular we have a central functor $(\cF_e)_A\hookrightarrow \cF_A$.  Thus we have that $\cF_B$ is a $G$-crossed braided fusion category.

    Since the tensor product action of $\nabla(\cB_z)$ on $\cB_z\boxtimes\cB_z\subset [\CGboxD]_e$ is fixed-point-free, its image in $\cF_e$ may be identified with $\cB_z\boxtimes \Vec\cong\cB_z\subset\cF_e$, and remains transparent.  Thus $\cF\in\cT(G,\cB_z)$.

    Finally, we have already observed that if $\cC$ and $\cD$ are in $\cT^{nd}(B,\cB_z)$ then so is their condensed fiber product.  Thus $\cT^{nd}(B,\cB_z)$ is also closed under the condensed fiber product.
\end{proof}

Recall from \cite{DNO} the definition of fusion categories over a symmetric category \( \cE \). A fusion category over \( \cE \) is a fusion category \( \cC \) equipped with a braided functor \( T_\cC: \cE \to \mathcal{Z}(\cC) \), where \( \mathcal{Z}(\cC) \) denotes the Drinfeld center of \( \cC \). The primary condition is that the composition of \( T_\cC \) with the forgetful functor \( \mathcal{Z}(\cC) \to \cC \) must be faithful.

A 1-morphism \( F: \mathcal{A} \rightarrow \mathcal{B} \) between fusion categories over \( \cE \) is a tensor functor \( F: \mathcal{A} \rightarrow \mathcal{B} \) with a tensor isomorphism \( u_E: F(T_{\mathcal{A}}(E)) \rightarrow T_{\mathcal{B}}(E) \) for \( E \in \mathcal{E} \), satisfying the commutativity of the diagram:
\[
\begin{tikzcd}
F(T_{\mathcal{A}}(E) \otimes X) \arrow[r, "F_{T(E), X}"] \arrow[d] & F(T_{\mathcal{A}}(E)) \otimes F(X) \arrow[r, "u_E \otimes \text{id}"] & T_{\mathcal{B}}(E) \otimes F(X) \arrow[d] \\
F(X \otimes T_{\mathcal{A}}(E)) \arrow[r, "F_{X, T(E)}"] & F(X) \otimes F(T_{\mathcal{A}}(E)) \arrow[r, "\text{id} \otimes u_E"] & F(X) \otimes T_{\mathcal{B}}(E)
\end{tikzcd}
\]
where the vertical arrows represent the central structures of \( T_{\mathcal{A}}(E) \) and \( T_{\mathcal{B}}(E) \), respectively.

A 2-morphism is defined as a tensor natural transformation \( a: F \rightarrow G \) between tensor functors over \( \mathcal{E} \), satisfying the commutativity of:
\[
\begin{tikzcd}
F(T_{\mathcal{A}}(E)) \arrow[r, "a_{T(E)}"] \arrow[dr, "u_E"'] & G(T_{\mathcal{A}}(E)) \arrow[d, "v_E"] \\
& T_{\mathcal{B}}(E)
\end{tikzcd}
\]
for any \( E \in \mathcal{E} \). Here, \( u_E \) and \( v_E \) denote the structures of the functors \( F \) and \( G \) over \( \mathcal{E} \). For further details, see \cite[Section 2.5]{DNO}.

According to \cite{DNO}, fusion categories over \( \cE \) form a 2-category; for the definitions of monoidal functors and natural transformations over \( \cE \), we refer the reader to the same source. Moreover, a tensor product $\boxtimes_{\cE}$ is defined on this 2-category, making it into a monoidal 2-category.

The categories in \( \cT(G, \cB_z) \) and \( \cT^{nd}(G, \cB_z) \) can be viewed as objects in the 2-category of fusion categories over \( \cB_z \), since any \( G \)-crossed braided fusion category \( \cC \) is a central extension via its \( G \)-braiding. Specifically, the \( G \)-braiding induces a braided functor from the trivial component \( \cC_e \) to the Drinfeld center \( \mathcal{Z}(\cC) \). Since the composition with the forgetful functor \( \mathcal{Z}(\cC) \to \cC \) is simply the inclusion of \( \cC_e \) into \( \cC \), we obtain a braided functor \( T_\cC: \cB_z \to \mathcal{Z}(\cC) \).

Given fusion categories \( \cC \) and \( \cD \) over a symmetric category \( \cE \), their tensor product \( \cC \boxtimes_\cE \cD \) is constructed as follows. Consider the algebra \( A \) in \( \mathcal{Z}(\cC \boxtimes \cD) \) defined by
\[
A = (T_\cC \boxtimes T_\cD)(R(1)),
\]
where \( T_\cC \) and \( T_\cD \) are the braided functors from \( \cE \) to \( \mathcal{Z}(\cC) \) and \( \mathcal{Z}(\cD) \), respectively, and \( R: \cE \to \cE \boxtimes \cE \) is the right adjoint to the tensor product functor \( \otimes: \cE \boxtimes \cE \to \cE \). Then \( \cC \boxtimes_\cE \cD \) is realized as the category of (right) \( A \)-modules in \( \cC \boxtimes \cD \), denoted \( (\cC \boxtimes \cD)_A \). We refer to \( A \) as the \emph{diagonal algebra} in \( \mathcal{Z}(\cC \boxtimes \cD) \).

The following result relates the condensed fiber product to the tensor product of fusion categories over \( \cB_z \).

\begin{lemma}
Let \( \cC, \cD \in \cT(G, \cB_z) \). The tensor product \(\cC \boxtimes_{\cB_z} \cD\) is \( G \times G \)-graded, and the fusion subcategory supported over the diagonal \(\Delta(G) = \{(g, g) : g \in G\}\) is exactly the condensed fiber product.
\end{lemma}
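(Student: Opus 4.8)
The plan is to identify the diagonal algebra $A$ with the condensation algebra $\Fun(B)$ and then recognize the two relevant module categories as one and the same. First I would compute $A=(T_\cC\boxtimes T_\cD)(R(\unit))$ explicitly. Since $\cB_z=\cC(\hB,q_z)$ is pointed with simple objects indexed by $\hB$, the right adjoint $R\colon\cB_z\to\cB_z\boxtimes\cB_z$ of the tensor product sends $\unit$ to $\bigoplus_{\varphi\in\hB}\varphi\boxtimes\varphi^{-1}$, endowed with its canonical connected \'etale algebra structure. Applying $T_\cC\boxtimes T_\cD$, composing with the forgetful functor $\cZ(\cC\boxtimes\cD)\to\cC\boxtimes\cD$, and using the embeddings of $\cB_z$ into $\cC_e$ and $\cD_e$, the underlying algebra of $A$ in $\cC\boxtimes\cD$ is $\bigoplus_{\varphi}\varphi\boxtimes\varphi^{-1}$. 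This is precisely $\Fun(B)$, the canonical algebra of the Tannakian subcategory $\nabla(\cB_z)=\langle(\varphi,\varphi^{-1})\rangle\cong\Rep(B)$.

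Next I would establish the $G\times G$-grading. As $\cC$ and $\cD$ are $G$-graded, $\cC\boxtimes\cD=\bigoplus_{(g,h)\in G\times G}\cC_g\boxtimes\cD_h$ is $G\times G$-graded, and the object underlying $A$ lies in the trivial component $\cC_e\boxtimes\cD_e$ because $\varphi,\varphi^{-1}\in\cB_z\subset\cC_e\cap\cD_e$. An algebra supported in the trivial component of a group-graded fusion category yields a module category inheriting the grading, since the action $A\otimes M\to M$ preserves graded components; hence $\cC\boxtimes_{\cB_z}\cD=(\cC\boxtimes\cD)_A=\bigoplus_{(g,h)}(\cC_g\boxtimes\cD_h)_A$. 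As $\Delta(G)\le G\times G$ is a subgroup, the diagonal sum $\bigoplus_{g\in G}(\cC_g\boxtimes\cD_g)_A$ is a fusion subcategory: it is closed under $\otimes_A$, and, since $A$ is Frobenius, under duals.

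Finally I would match this diagonal subcategory with the condensed fiber product. The diagonal part $\bigoplus_{g}(\cC_g\boxtimes\cD_g)_A$ is exactly the category of $A$-modules whose underlying object lies in the fiber product $\CGboxD=\bigoplus_g\cC_g\boxtimes\cD_g$, that is, $(\CGboxD)_A$. By definition the condensed fiber product $[\CGboxD]_B$ is the $B$-de-equivariantization of $\CGboxD$ via $\nabla(\cB_z)\cong\Rep(B)$, which is precisely the category of $\Fun(B)$-modules in $\CGboxD$; since $A=\Fun(B)$ this is $(\CGboxD)_A$. Comparing the two descriptions, and noting that both tensor products are computed as $\otimes_A$ over the same algebra in the same ambient category, gives the asserted identification.

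The step I expect to be the main obstacle is the first one: verifying that the algebra structure on $A$ produced by the adjunction coincides with that of $\Fun(B)$ inside $\CGboxD$, including that the half-braiding of $A\in\cZ(\cC\boxtimes\cD)$ restricts on the diagonal to the transparency data making $A$ a condensable algebra there. This is exactly where the hypothesis $\cB_z\subset\cC_e\cap(\cC_e)'$ and the triviality of the $G$-action on $\cB_z$ (Theorem \ref{thm: trivial action}, Lemma \ref{lem:trivial action}) enter, guaranteeing that the image of $A$ lands in the transparent subcategory $\nabla(\cB_z)$ and that the two structures agree by uniqueness of the connected \'etale algebra structure on this object. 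Once $A$ is pinned down as $\Fun(B)$ both as an object and as an algebra, the grading and the module-category comparison are formal.
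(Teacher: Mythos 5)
Your proposal is correct and follows essentially the same route as the paper's proof: compute $R(\unit)=\bigoplus_{\varphi\in\hB}\varphi\boxtimes\varphi^{-1}\cong\Fun(B)$, observe that this diagonal algebra lies in $\nabla(\cB_z)$ inside the trivial $(e,e)$-component so that the $G\times G$-grading descends to $(\cC\boxtimes\cD)_A$, and identify the diagonal component with the $\Fun(B)$-modules in $\CGboxD$, i.e.\ the condensed fiber product. Your version simply makes explicit several steps the paper leaves implicit (the descent of the grading to modules, closure of the diagonal under $\otimes_A$ and duals, and the agreement of the adjunction algebra structure with that of $\Fun(B)$), which is a faithful elaboration rather than a different argument.
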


\begin{proof}
Since \( \cB_z \) is pointed with simple objects \(\varphi \in \hB\), the adjoint to the tensor product is given by
\[
R(\varphi) = \bigoplus_{\tau, \gamma \in \hB : \tau \gamma = \varphi} \tau \boxtimes \gamma.
\]
In particular, for \( \varphi = \unit \), we have
\[
R(\unit) = \bigoplus_{\varphi \in \hB} \varphi \boxtimes \varphi^{-1} \cong \mathrm{Fun}(B).
\]
So the diagonal algebra is \( \mathrm{Fun}(B) \in \nabla(\cB_z) \subset \mathcal{Z}(\cC \boxtimes \cD) \), where \( \nabla(\cB_z) \) is the symmetric subcategory generated by objects \( (\varphi, \varphi^{-1}) \) with \( \varphi \in \hB \).

Since \( \cC \boxtimes \cD \) is canonically \( G \times G \)-graded and the diagonal algebra is in the \( (e,e) \)-component, \( \cC \boxtimes_{\cB_z} \cD \) is \( G \times G \)-graded. By definition, the condensed fiber product corresponds to the fusion category supported over \( \Delta(G) \).
\end{proof}

\begin{corollary}
The condensed fiber product induces a monoidal structure on \( \cT(G, \cB_z) \) and \( \cT^{nd}(G, \cB_z) \).
\end{corollary}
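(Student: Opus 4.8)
The plan is to realize the condensed fiber product as the restriction to the diagonal grading of the monoidal $2$-category structure on fusion categories over $\cB_z$ constructed in \cite{DNO}, using the identification of the preceding lemma. Concretely, an argument analogous to Theorem~\ref{thm:closed under cfp} shows that $\boxtimes_{\cB_z}$ sends a pair $(\cC,\cD)\in\cT(G_1,\cB_z)\times\cT(G_2,\cB_z)$ to an object of $\cT(G_1\times G_2,\cB_z)$, since the diagonal algebra $\Fun(B)$ lies over the $(e,e)$-component of $\cZ(\cC\boxtimes\cD)$ and therefore the $G_1\times G_2$-grading descends to $\cC\boxtimes_{\cB_z}\cD$. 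By the preceding lemma the condensed fiber product is then the pullback $\Delta^*(\cC\boxtimes_{\cB_z}\cD)$ of this grading along the diagonal homomorphism $\Delta:G\to G\times G$, i.e. the subcategory supported over $\Delta(G)$. Closure is already guaranteed by Theorem~\ref{thm:closed under cfp}, so it remains to exhibit a unit and coherent associativity constraints.

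For the unit I would take $\Vec_G\boxtimes\cB_z\in\cT(G,\cB_z)$ (respectively $\cZ(\cB_z)\in\cT^{nd}(B,\cB_z)$ when $G=B$). Using that the degree-$g$ component is $(\Vec_G\boxtimes\cB_z)_g=(\Vec_G)_g\boxtimes\cB_z$ one computes
\[
(\Vec_G\boxtimes\cB_z)\Gbox\cD=\bigoplus_{g\in G}\big((\Vec_G)_g\boxtimes\cB_z\big)\boxtimes\cD_g\cong\cB_z\boxtimes\cD,
\]
since each $(\Vec_G)_g$ is a single invertible object. De-equivariantizing the diagonal $\nabla(\cB_z)\cong\Rep(B)$ of the two transparent copies of $\cB_z$ (the extra factor and the one inside $\cD$) merges them into a single copy, yielding a $G$-crossed braided equivalence $[(\Vec_G\boxtimes\cB_z)\Gbox\cD]_B\cong\cD$. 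These equivalences, together with their mirror images, supply the left and right unitors.

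Associativity should come from the interplay of three facts: the associativity of $\boxtimes_{\cB_z}$ in the DNO monoidal $2$-category, the coassociativity of the group diagonal $(\Delta\times\id)\circ\Delta=(\id\times\Delta)\circ\Delta:G\to G\times G\times G$, and the compatibility of grading-pullback with $\boxtimes_{\cB_z}$. The last point, which I expect to be the main obstacle, is the assertion that $\Delta^*(\mathcal{X})\boxtimes_{\cB_z}\cE\cong(\Delta\times\id)^*(\mathcal{X}\boxtimes_{\cB_z}\cE)$ for $\mathcal{X}\in\cT(G\times G,\cB_z)$; granting it, both bracketings of the condensed fiber product are identified with the subcategory of $\cC\boxtimes_{\cB_z}\cD\boxtimes_{\cB_z}\cE$ supported over the full diagonal $\{(g,g,g)\}$, and the DNO associator, being grading-preserving (all the diagonal algebras again live over identity components), restricts to the associativity equivalence of the condensed fiber product. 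This grading-locality holds because taking local modules for a diagonal algebra over the identity component is performed componentwise with respect to the grading.

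Finally, the pentagon and triangle coherence axioms are inherited from the DNO monoidal $2$-category, since the restricted unit and associativity equivalences are obtained from the DNO ones by passing to a fixed graded subcategory. The only additional verification is that these equivalences are morphisms in $\cT(G,\cB_z)$ (respectively $\cT^{nd}(B,\cB_z)$): that they intertwine the embeddings of $\cB_z$ and are $G$-crossed braided for the structure produced in Theorem~\ref{thm:closed under cfp}. This amounts to tracking the transparent subcategory $\cB_z$ through the de-equivariantization, which is straightforward once grading-locality is established.
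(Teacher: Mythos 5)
Your proposal takes essentially the same route as the paper's own proof: identify the condensed fiber product with the restriction to the diagonal $\Delta(G)$ of the tensor product $\boxtimes_{\cB_z}$ in the monoidal 2-category of fusion categories over $\cB_z$ from \cite{DNO}, and invoke Theorem~\ref{thm:closed under cfp} for closure. The paper's proof is terser, citing \cite{DNO} and the closure theorem without further verification, so your additional details (the unit $\Vec_G\boxtimes\cB_z$ with its unitors, the grading-locality of $\boxtimes_{\cB_z}$ under pullback along $\Delta$, and the inherited coherence) are correct elaborations of steps the paper leaves implicit.
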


\begin{proof}
As shown in \cite[Section 2.5]{DNO}, the tensor product of fusion categories over a symmetric category defines a monoidal structure. For $\TGB$ and $\ndTBB$, the tensor product \( \boxtimes_{\cB_z} \) is not closed. To obtain a category in $\TGB$ (resp. $\ndTBB$), we restrict to the category supported over the diagonal subgroup \( \Delta(G) \) (resp. \(\Delta(B)\)). By Theorem~\ref{thm:closed under cfp}, this restriction is well-defined. Thus, the condensed fiber product defines a tensor product on \( \cT(G, \cB_z) \) and \( \cT^{nd}(B, \cB_z) \).
\end{proof}

\section{Relating Zesting and the Condensed Fiber Product}\label{sec:zesting_cfp}

We will show that in many cases, zesting can be achieved via the condensed fiber product. As a preliminary step, we recall the classification of $G$-crossed braided fusion categories using the 2-categorical Picard groups, as described in \cite{ENO3} and \cite{DN}.

Given a braided fusion category $\mathcal{B}$, we are interested in two types of $G$-extensions $\mathcal{B} \subset \mathcal{C} = \bigoplus_{g \in G} \mathcal{C}_g$. The first is the $G$-crossed braided extension (equivalently, a central extension), and the second is the $G$-braided extension (in the case where $G$ is abelian).

In \cite{DN}, $G$-crossed extensions of a braided category $\mathcal{B}$ are classified via the Picard 2-categorical group of invertible $\mathcal{B}$-module categories. Consequently, equivalence classes of such extensions can be described in terms of cohomology groups associated with a homomorphism $G \to \operatorname{Pic}(\mathcal{B})$. The zesting construction provides a direct interpretation of this classification in terms of the homomorphism $G \to \operatorname{Pic}(\mathcal{B})$, as discussed in \cite[Theorem 3.12.]{DGPRZ2}.  

\begin{theorem}\cite[Theorem 3.12]{DGPRZ2}\label{theo DGPRZ2}
Two $G$-crossed braided fusion extensions of a braided fusion category $\cB$ correspond to the same group homomorphism $G \to \operatorname{Pic}(\mathcal{B})$ if and only if they are related by $G$-crossed braided zesting.
\end{theorem}
\qed

\begin{corollary}
\label{lem: pointed is trivial homomorphism}
A $G$-crossed extension of a pointed braided fusion category $\cB$ is pointed if and only if the associated homomorphism $G \to \operatorname{Pic}(\cB)$ is trivial. Moreover, all pointed braided $G$-crossed extensions of $\cB$ are related by $G$-zesting.
\end{corollary}

\begin{proof}
In general, a $G$-crossed extension $\mathcal{C}$ of a braided category $\mathcal{B}$ contains an invertible object in $\mathcal{C}_g$ if and only if $\mathcal{C}_g \cong \mathcal{B}$ as $\mathcal{B}$-module categories \cite{GALINDO2011233}. In particular, a $G$-extension of a pointed category is pointed if and only if the associated homomorphism is trivial. Hence, by Theorem \ref{theo DGPRZ2}, any pair of pointed $G$-crossed braided extensions are related by $G$-zesting.
\end{proof}

\begin{remark}
Now suppose we are in the special case where $\mathcal{P} \in \ndTBB$ is pointed and $\cP_e=\cB_z$. In this case, $\mathcal{P}$ is a pointed minimal modular extension of $\mathcal{B}_z$, and we have already seen in Lemma \ref{lem:allNDpointedzest} that $\mathcal{P}$ must be a $B$-braided zesting of $\mathcal{Z}(\mathcal{B}_z)$. We can also obtain the explicit zesting directly. Indeed, the underlying $B$-graded fusion category has already been identified: it is a zesting of $\Vec_B \boxtimes \mathcal{B}_z$. Notice that, as a fusion category, $\mathcal{Z}(\mathcal{B}_z)$ is equivalent to $\Vec_B \boxtimes \mathcal{B}_z$. The braiding 
\[
Z_{gh} \alpha(g,h) = Z_g Z_h \cong Z_h Z_g = Z_{gh} \alpha(h,g)
\]
induces an isomorphism $t(g,h): \alpha(g,h) \cong \alpha(h,g)$. Analogous arguments, as above, using the hexagon equations on $\mathcal{P}$ imply that $(\alpha, \nu, t)$ is a braided $B$-zesting of $\mathcal{Z}(\mathcal{B}_z)$.
\end{remark}

\begin{theorem}\label{thm:cfp is zesting}
\begin{enumerate}
    \item Suppose $\mathcal{C}, \mathcal{P} \in \TGB$ with $\mathcal{P}$ pointed and $\mathcal{P}_e = \mathcal{B}_z$. Then, as a $G$-crossed braided fusion category, $(\mathcal{C} \Gbox \mathcal{P})_B$ is a zesting of $\mathcal{C}$.
    
    \item Suppose $\mathcal{C}, \mathcal{P} \in \ndTBB$ with $\mathcal{P}_e = \mathcal{B}_z$ as above. Then $(\mathcal{C} \Gbox \mathcal{P})_B$ is a braided $B$-zesting of $\mathcal{C}$.
\end{enumerate}
\end{theorem}

\begin{proof}
(1) We recall a realization of tensor products of $\mathcal{B}$-module categories, which is useful for this proof, as described in \cite{ENO3}. For $\mathcal{B}$-module categories $\mathcal{M}$ and $\mathcal{N}$, the tensor product $\mathcal{M} \boxtimes_{\mathcal{B}} \mathcal{N}$ is realized via the \emph{diagonal algebra} $A = \bigoplus_{X \in \operatorname{Irr}(\mathcal{B})} X \boxtimes X^* \in \mathcal{B} \boxtimes \mathcal{B}$, as $A$-modules in $\mathcal{M} \boxtimes \mathcal{N}$, seen as a $\mathcal{B} \boxtimes \mathcal{B}$-module category.

In the particular case where $\mathcal{B} = \mathcal{B}_z$ is a pointed symmetric fusion category, the \emph{diagonal algebra} is the algebra $Fun(B) \in \nabla(\mathcal{B}_z)$, previously considered in the definition of the condensed fiber product. Hence, for $\mathcal{C}, \mathcal{D} \in \cT(G, \mathcal{B}_z)$, the $g$-component of the condensed fiber product $[\mathcal{C} \Gbox \mathcal{D}]_B$ is exactly $\mathcal{C}_g \boxtimes_{\mathcal{B}_z} \mathcal{D}_g$.

By Corollary \ref{lem: pointed is trivial homomorphism}, if $\mathcal{D} = \mathcal{P}$ is pointed, then $\mathcal{P}_g \cong \mathcal{B}_z$, so 
\[
\left( [\mathcal{C} \Gbox \mathcal{P}]_B \right)_g = \mathcal{C}_g \boxtimes_{\mathcal{B}_z} \mathcal{P}_g = \mathcal{C}_g \boxtimes_{\mathcal{B}_z} \mathcal{B}_z \cong \mathcal{C}_g.
\]
This implies, in particular, that the group homomorphism $G \to \operatorname{Pic}(\mathcal{C}_e)$ does not change under the condensed fiber product for any pointed extension of $\mathcal{B}_z$. Hence, by Theorem \ref{theo DGPRZ2}, they are related by zesting.

(2) It follows directly from part (1) and \cite[Theorem 3.16]{DGPRZ2}, where it was shown that braided extensions structures over a zestings correspond precisely to braided zestings.
\end{proof}

We conclude with detailed examples illustrating the relationship between the condensed fiber product and zesting, as established by the previous results.

\begin{example}
    Recall that in Example \ref{ex:su24z4} we constructed the condensed fiber product $[SU(2)_4\stackrel{\Z/2}{\boxtimes}\Vec_{\Z/4}]_{\Z/2}$ and discovered that it is not braided, although it is $\Z/2$-crossed braided.  We can easily see that $\Vec_{\Z/4}$ may be obtained by $\Z/2$-zesting $\Vec_{\Z/2}\boxtimes\Rep(\Z/2)$ using the $2$-cocycle defined by $\lambda(1,1)=(e,\varphi)$ where $\varphi$ is the generator of $\Rep(\Z/2)$, with trivial $3$-cochain $\nu=1$.  Similarly, if we take the same $\Z/2$-crossed zesting datum $(\lambda,1)$ and apply it to $SU(2)_4$ then we obtain the same fusion category as in Example \ref{ex:su24z4}.  Thus we have:
    \[[SU(2)_4\stackrel{\Z/2}{\boxtimes}(\Vec_{\Z/2}\boxtimes\Rep(\Z/2))^{(\lambda,1)}]_{\Z/2}\cong (SU(2)_4)^{(\lambda,1)} \] \emph{as $\Z/2$-graded fusion categories} i.e., the condensed fiber product of $SU(2)_4$ with $\Vec_{\Z/4}$ can be obtained by $\Z/2$-zesting $SU(2)_4$ with the same zesting datum used to obtain $\Vec_{\Z/4}$ via a  $\Z/2$-zesting of the canonical pointed category $\Vec_{\Z/2}\boxtimes\Rep(\Z/2)\in\cT(\Z/2,\Rep(\Z/2))$.
\end{example}

\begin{example}
    In Example \ref{ex:z4}, it was shown that $[\cC(\Z/4,q)\stackrel{\Z/2}{\boxtimes}\cC(\Z/4,q)]_{\Z/2}$, with $q(g^a)=e^{\pi i a^2/4}$, is equivalent to $\Sem^{\boxtimes 2}$.  We may apply the results of \cite[Propositions 6.4,6.5]{DGPRZ} to verify that there is a braided ribbon $\Z/2$-zesting datum $(\lambda,\nu,t,f)$ such that $\cZ(\Z/2)^{(\lambda,\nu,t,f)}\cong \cC(\Z/4,q)$, and that, moreover, $\cC(\Z/4,q)^{(\lambda,\nu,t,f)}\cong \Sem^{\boxtimes 2}$.  Since all of these categories have trivial component $\sVec=\langle f\rangle$, we find that $\lambda$ is the unique choice such that $\lambda(1,1)=f$, the fermion.  Notice that this changes $\Z/4$ fusion rules into $\Z/2\times \Z/2$ fusion rules, and vice versa.  A standard choice for $\nu$ is given in \cite[Proposition 6.4]{DGPRZ} (taking $b=0$ in the notation there) and then the choices of $t$ correspond to a root of unity $s$ with $s^2=-i$, for which we may take the choice $s=e^{-\pi i/4}$.  Ultimately, we find that the (unique unitary) ribbon zesting has the effect of multiplying the twists of the objects in the non-trivial component by $s^{-1}=e^{\pi i/4}$.  Thus the twists in $\cZ(\Z/2)$ change to those of $\cC(\Z/4,q)$ as follows: \[[1,-1,1,1]\stackrel{(\lambda,\nu,t,f)}{\rightarrow}[1,-1,e^{\pi i/4},e^{\pi i/4}],\] which, in turn change to those of $\Sem^{\boxtimes 2}$ as follows: \[[1,-1,e^{\pi i/4},e^{\pi i/4}]\stackrel{(\lambda,\nu,t,f)}{\rightarrow}[1,-1,i,i].\]  Thus we have shown that, in this case, the condensed fiber product of $\cC$ with $\cZ(\cB_z)^{(\lambda,\nu,t)}$ is equivalent to $\cC^{(\lambda,\nu,t)}$.
\end{example}  

\begin{remark}
Computing the multiplicative central charge $\sum_i d_i^2 \theta_i / \dim(\cC)$ directly from ribbon zesting data can be tedious. However, computing the central charge of a condensed fiber product is straightforward. 

Indeed, let $\cC, \cP \in \ndTBB$ with $\cP_e = \cB_z$ and central charges $\chi(\cC)$ and $\chi(\cP)$, respectively. Then the condensed fiber product $[\cC \stackrel{B}{\boxtimes} \cP]_B$ has a multiplicative central charge $\chi(\cC) \cdot \chi(\cP)$. This follows directly from the definition. Since the central charge of pointed categories is relatively simple to compute, identifying a zesting with a condensed fiber product as described above allows us to compute the central charge of a zesting more easily. In particular, since the central charge of a pointed modular category is an 8th root of unity, zesting cannot significantly alter the central charge.
\end{remark}

\section{Discussion}\label{sec:discussion}

Theorem \ref{thm:cfp is zesting} states that the condensed fiber product with pointed (braided) extensions corresponds to (braided) zestings. Hence, the condensed fiber product can be seen as a generalization of zesting. However, zesting has advantages from a computational perspective, as it allows for explicit formulas for fusion rules, $F$-matrices, $R$-matrices, and modular data, as established in \cite{DGPRZ}.

The investigation of the condensed fiber product and its relationship with zesting also suggests several further avenues of exploration.  We list a few that will be left to future work.
\begin{enumerate}
    \item The condensed fiber product (and zesting) can be applied to more general settings: take any two $G$-graded fusion categories $\cC$ and $\cD$ such that $\cB_z\subset\cC_e$ and $\cB_z\subset\cD_e$, but is not necessarily transparent in the respective trivial components.  Then one may still form the condensed fiber product, since $\nabla(\cB_z)$ remains an algebra object in $\cC\boxtimes\cD$.  Moreover, the resulting category is $G$-graded, since $\nabla(\cB_z)$ lies in the trivially graded component of $\cC\boxtimes\cD$.  
    \item In the case of two non-degenerate categories $\cC,\cD$ containing a common symmetric pointed subcategory $\cB_z$, we have an interpretation of the condensed fiber product as the local $\Fun(B)$-modules in $\cC\boxtimes\cD$, i.e., $[\cC\boxtimes\cD]_B=(\cC\boxtimes\cD)_{\Fun(B)}^{loc}$.  Suppose $\cC$ and $\cD$ are non-degenerate fusion categories with a common condensible algebra $\cA$. 
 Then, as in \cite{DNO}, one has a diagonal condensible algebra object $\Hc$ in $\cA\boxtimes\cA\subset \cC\boxtimes\cD$ so that $(\cC\boxtimes\cD)_\Hc^{loc}$ is again a non-degenerate category containing $\cA$.  For example, if one has $\Rep(G)\subset \cC$ for some non-abelian group $G$ one may take $\cD$ to be any of the zestings of $\cZ(\Rep(G))$, for example $\Rep(D^\omega G)$ for some $3$-cocycle $\omega$.  Then this more general condensed fiber product could be thought of as a generalized zesting of $\cC$.  
\end{enumerate}

\bibliography{refs} 
\bibliographystyle{plain}

\end{document}